\theoremstyle{plain}
\def\dis{\displaystyle}
\def\nd{\noindent}
\def\thend{\rule{3mm}{3mm}}
\newtheorem{theorem}{Theorem}[section]
\newtheorem{lemma}{Lemma}[section]
\newtheorem{definition}{Definition}[section]
\newtheorem{corollary}{Corollary}[section]
\newtheorem*{theorem*}{Theorem}
\numberwithin{equation}{section}
\begin{document}
\title{Existence of a positive solution for a logarithmic Schr\"{o}dinger  equation with saddle-like potential}
\author{ Claudianor O. Alves\footnote{C.O. Alves was partially supported by CNPq/Brazil  304804/2017-7.} \,\, and \,\, Chao Ji \footnote{C. Ji was partially supported by Shanghai Natural Science Foundation(18ZR1409100).}}

\maketitle

\begin{abstract}
In this  article we use the variational method developed by Szulkin \cite{szulkin} to prove  the existence of a positive solution for the following logarithmic Schr\"{o}dinger equation
$$
\left\{
\begin{array}{lc}
-{\epsilon}^2\Delta u+ V(x)u=u \log u^2, & \mbox{in} \,\, \mathbb{R}^{N}, \\
u \in H^1(\mathbb{R}^{N}), & \;  \\
\end{array}
\right.
$$
where  $\epsilon >0, N \geq 1$  and $V$ is a saddle-like potential.
\end{abstract}

{\small \textbf{2000 Mathematics Subject Classification:} 35A15, 35J10; 35B09}

{\small \textbf{Keywords:} Variational methods, Logarithmic Schr\"odinger  equation, Saddle-like potential, Positive solution.}

\section{Introduction}
 In recent years, the nonlinear Schr\"odinger equation
\begin{equation}\label{NLS}
i \epsilon \frac{\partial  \Psi}{\partial t} = - \epsilon^{2}\Delta \Psi + (V(x	)+E)\Psi- f(\Psi), \quad\text{for}\,\, z\in \mathbb{R}^{N}
\end{equation}
 where $N\geq 2$, $\epsilon>0$, $V, f$ are continuous functions, has been studied by many researchers.  It is very important
 to seek the standing wave solutions of (\ref{NLS}), which are solutions of the type $\Phi(z, t)=\exp(iEt)u(z), u:\mathbb{R}^{N}\rightarrow \mathbb{R}$, where $u$ is a solution of the elliptic equation
$$
\left\{
\begin{array}{lc}
-{\epsilon}^2\Delta u+ V(x)u=f(u), & \mbox{in} \,\, \mathbb{R}^{N}, \\
u \in H^1(\mathbb{R}^{N}), & \;  \\
\end{array}
\right.
\eqno{(P_\epsilon)}
$$
The existence and concentration of positive solutions for general semilinear elliptic equation $(P_\epsilon)$, for the case $N\geq 2$, have been extensively studied, we refer to \cite{Alves, AlvesMiyagaki, BPW, BW, DF, DFM, fw, o, r, W} for the advances in this area.

In \cite{DFM}, del Pino, Felmer and Miyagaki considered the case where potential $V$ has a geometry like saddle, essentially
they assumed the following conditions on $V$: First of all, they fixed  two subspaces $X$, $Y\subset \mathbb{R}^{N}$ such that
$\mathbb{R}^{N}=X\oplus Y$. By supposing that $V$ is bounded, they fixed $c_{0}$, $c_{1}>0$ satisfying
$$
c_{0}=\inf_{z\in \mathbb{R}^{N}}V(z)>0 \quad\text{and}\quad c_{1}=\sup_{x\in X}V(x).
$$

Furthermore, they also assumed that $V\in C^{2}(\mathbb{R}^{N})$ and it verifies the  following geometric conditions:
\begin{itemize}
	\item[\rm ($V1$)]
	$$
	c_{0}=\inf_{R>0}\sup_{x\in \partial B_{R}(0)\cap X}V(x)<\inf_{y\in Y_\lambda}V(y),
	$$
for some $\lambda \in (0,1)$ and $Y_\lambda=\{z \in \mathbb{R}^N\,:\, |z.y| > \lambda|z||y|, \, \mbox{for some} \,\, y \in Y\}$ 	
	
	\item[\rm ($V2$)]The functions $V$, $\frac{\partial V}{\partial x_{i}}$ and $\frac{\partial^{2} V}{\partial x_{i}\partial x_{j}}$ are bounded
in $\mathbb{R}^N$ for all $i, j\in \{1, \cdots, N\}$.

		\item[\rm ($V3$)] $V$ satisfies the Palais-Smale condition, that is, if $(x_{n})\subset \mathbb{R}^N$ is a sequence such that
$(V(x_{n}))$ is convergent and $\nabla V(x_{n})\rightarrow 0$, then $(x_{n})$ possesses a convergent subsequence in $\mathbb{R}^N$.

\end{itemize}

Using the above conditions on $V$, and supposing that
$$
c_{1}<2^{\frac{2(p-1)}{N+2-p(N-2)}}c_{0},
$$
del Pino, Felmer and Miyagaki used the variational method to show the existence of positive solutions for the following problem
$$
-\epsilon^{2}\Delta u+ V(z)u=\vert u\vert^{p-2}u,\, \mbox{in} \,\, \mathbb{R}^{N},
$$
where $p\in (2, 2^{*})$ if $N\geq 3$ and $p\in (2, +\infty)$ if $N=1, 2$, for $\epsilon>0$
small enough.

Under the same assumptions on the potential $V$,  Alves \cite{Alves} considered the existence of a positive solution for the following elliptic
equation with exponential critical growth in $\mathbb{R}^{2}$
$$
-\epsilon^{2}\Delta u+ V(x)u=f(u),  \,\, \mbox{in} \,\,\mathbb{R}^{2}.
$$
After, in \cite{AlvesMiyagaki}, Alves and Miyagaki  studied the following critical nonlinear elliptic equation with saddle-like potential in $\mathbb{R}^{N}$
$$
\epsilon^{2s}(-\Delta)^{s} u+ V(z)u=\lambda \vert u\vert^{q-2}u+\vert u\vert^{2_{s}^{*}-2}u,\,\, \mbox{in} \,\, \mathbb{R}^{N},
$$
where $\epsilon, \lambda>0$ are positive parameters, $q\in (2, 2_{s}^{*})$, $2_{s}^{*}=\frac{2N}{N-2s}$, $N>2s$, $s\in (0, 1)$, $(-\Delta)^{s} $
is fractional Laplacian. Under similar assumptions on $V$,  the authors obtained the existence of a positive solution by using the variational method.

Recently, the logarithmic Schr\"odinger equation given by
$$
i \epsilon \partial_t \Psi = - \epsilon^{2}\Delta \Psi + (W(x	)+w)\Psi- \Psi \log|\Psi|^2, \ \Psi:[0, \infty) \times \mathbb{R}^{N} \rightarrow \mathbb{C}, \ N \geq 1,
$$
 has  also been received considerable attention. This equation appears in interesting physical applications, such as quantum mechanics, quantum optics, nuclear physics, transport and diffusion phenomena, open quantum systems, effective quantum gravity, theory of superfluidity and Bose-Einstein condensation (see \cite{z} and the references therein). In its turn, standing waves solution, $\Psi$, for this logarithmic Schr\"odinger equation  is related to solutions of the equation
$$
-\epsilon^{2}\Delta u+ V(x)u=u \log u^2,  \,\, \mbox{in} \,\, \mathbb{R}^{N}.
$$
Besides the importance in applications, this last  equation also raises many difficult mathematical problems.  The natural candidate for the associated energy functional would formally be the functional
\begin{equation}\label{lula}
\widehat{I}_\epsilon(u)=\frac{1}{2}\displaystyle \int_{\mathbb{R}^N}(\epsilon^{2}|\nabla u|^2+(V(x)+1)|u|^2)dx-\displaystyle \frac{1}{2}\int_{\mathbb{R}^N} u^2 \log u^2\,dx.
\end{equation}
It is easy to see that each critical point of $\widehat{I}_\epsilon$ is a solution of (\ref{lula}).
However, this functional is not well defined in $H^{1}(\mathbb{R}^N)$ because there is $u \in H^{1}(\mathbb{R}^N)$ such that $\int_{\mathbb{R}^N}u^{2}\log u^2 \, dx=-\infty$. In order to overcome this technical difficulty some authors have used different techniques, for more details see \cite{AlvesdeMorais}, \cite{AlvesdeMoraisFigueiredo}, \cite{AlvesChao},  \cite{squassina}, \cite{ASZ}, \cite{DZ}, \cite{cs},  \cite{sz}, \cite{sz2}, \cite{TZ}  and their references.

Motivated by studies found in the above-mentioned papers and the results  obtained in \cite{Alves, AlvesMiyagaki,DFM}, in the present paper, our main goal is to show the existence of a positive solution for the following logarithmic Schr\"{o}dinger  equation
\begin{equation}\label{1}
-\epsilon^{2}\Delta u+ V(x)u=u \log u^2,  \quad \mbox{in} \quad \mathbb{R}^{N},
\end{equation}
where $N\geq 1$, $\epsilon >0$ is a positive parameter and the potential $V$  satisfies
the conditions $(V1)-(V3)$. In our case, different from \cite{DFM}, we can assume $c_0>-1$.

By a change of variable, we know that problem (\ref{1}) is equivalent to the problem
\begin{equation}\label{1'}
\left\{
\begin{array}{lc}
-\Delta u+ V(\epsilon x)u=u \log u^2, & \mbox{in} \quad \mathbb{R}^{N}, \\
u \in H^1(\mathbb{R}^{N}). & \;  \\
\end{array}
\right.
\end{equation}
We shall use the variational method found in Szulkin \cite{szulkin} to study the problem (\ref{1'}). Firstly notice that, a weak solution of (\ref{1'}) in $H^1(\mathbb{R}^{N})$ is a critical point of the associated energy functional
$$
J_{\epsilon}(u)=\dis\frac{1}{2}\int_{\mathbb{R}^N} \big (|\nabla u|^2+(V(\epsilon x) +1)|u|^2\big)dx-\dis\frac{1}{2}\int_{\mathbb{R}^N} u^2\log u^2dx.
$$
\begin{definition}
For us, a positive solution of (\ref{1'}) means a positive function $u \in H^1(\mathbb{R}^{N}) \setminus \{0\}$ such that $ u^2\log u^2 \in L^1(\mathbb{R}^{N})$ and
\begin{equation}
\displaystyle \int_{\mathbb{R}^N} (\nabla u \cdot \nabla v +V(\epsilon x)u \cdot v)dx=\displaystyle \int_{\mathbb{R}^N} uv \log u^2dx,\,\, \mbox{for all }\, v \in C^\infty_0(\mathbb{R}^{N}).
\end{equation}
\end{definition}

Following the approach explored in \cite{AlvesdeMorais,cs,sz}, due to the lack of smoothness of $J_{\epsilon}$, let us decompose it into a sum of a $C^1$ functional plus a convex lower semicontinuous functional, respectively. For $\delta>0$, let us define the following functions:

$$
F_1(s)=
\left\{ \begin{array}{lc}
0, & \; s=0 \\
-\frac{1}{2}s^2\log s^2 & \; 0<|s|<\delta \\
-\frac{1}{2}s^2(
\log\delta^2+3)+2\delta|s|-\frac{1}{2}\delta^2,  & \; |s| \geq \delta
\end{array} \right.
$$

\noindent and

$$
F_2(s)=
\left\{ \begin{array}{lc}
0, & \;  |s|<\delta \\
\frac{1}{2}s^2\log(s^2/\delta^2)+2\delta|s|-\frac{3}{2}s^2-\frac{1}{2}\delta^2,  & \; |s| \geq  \delta.
\end{array} \right
.$$
Therefore
\begin{equation}\label{efes}
F_2(s)-F_1(s)=\frac{1}{2}s^2\log s^2, \quad \forall s \in \mathbb{R},
\end{equation}
and the functional $J_\epsilon:H^{1}(\mathbb{R}^N) \rightarrow (-\infty, +\infty]$ may be rewritten as

\begin{equation}\label{funcional}
J_\epsilon(u)=\Phi_\epsilon(u)+\Psi(u), \quad u \in H^{1}(\mathbb{R}^N)
\end{equation}
 where
\begin{equation}\label{fi}
\Phi_\epsilon(u)=\frac{1}{2}\int_{\mathbb{R}^N} (|\nabla u|^{2}+(V(\epsilon x)+1))|u|^{2})\,dx-\displaystyle \int_{\mathbb{R}^N} F_2(u)\,dx,
\end{equation}
\noindent and
\begin{equation}\label{psi}
\Psi(u)=\displaystyle \int_{\mathbb{R}^N} F_1(u)\,dx.
\end{equation}

It was proved in \cite{cs} and \cite{sz} that $F_1$ and $F_2$ verify the following properties:
\begin{equation}\label{eq1}
F_1, F_2 \in C^1(\mathbb{R},\mathbb{R}).
\end{equation}
If $\delta >0$ is small enough, $F_1$ is convex, even, $F_1(s)\geq 0$ for all $ s\in \mathbb{R}$ and
\begin{equation}\label{eq2}
F'_1(s)s\geq 0, \ s \in \mathbb{R}.
\end{equation}
For each fixed $p \in (2, 2^*)$, there is $C>0$ such that
\begin{equation}\label{eq5}
|F'_2(s)|\leq C|s|^{p-1}, \quad \forall s \in \mathbb{R}.
\end{equation}

Using the above information, it follows that $\Phi_\epsilon \in C^{1}(H^1(\mathbb{R}^{N}),\mathbb{R})$, $\Psi$ is convex and lower semicontinuous, but $\Psi$ is not a $C^1$ functional, since we are working on $\mathbb{R}^N$.

Before to state our main result, we need to fix some notations. If potential $V$ in (\ref{1'})  is replaced by a constant $A>-1$, we have the following problem
\begin{equation}\label{constant}
\left\{
\begin{array}{lc}
-\Delta u+ A u=u \log u^2, & \quad \mbox{in} \quad \mathbb{R}^{N}, \\
u \in H^1(\mathbb{R}^{N}). & \;  \\
\end{array}
\right.
\end{equation}
The corresponding energy functional associated to (\ref{constant}) will be denoted by \linebreak $J_A:H^{1}(\mathbb{R}^N) \rightarrow (-\infty, +\infty]$ and defined as
$$
J_{A}(u)=\dis\frac{1}{2}\int_{\mathbb{R}^N} \big (|\nabla u|^2+({A} +1)|u|^2\big)dx-\dis\frac{1}{2}\int_{\mathbb{R}^N} u^2\log u^2dx.
$$

In \cite{sz} is proved that problem (\ref{constant}) has a positive ground state solution given by
\begin{equation}\label{sisi}
m(A):=\inf_{u \in \mathcal{N}_{A}}J_{A}(u)=\inf_{u \in D(J_{A})\backslash \{0\}}\Big\{\max_{t\geq 0}J_{A}(tu)\Big\},
\end{equation}
where
$$
\mathcal{N}_{A}=\left\{u \in D(J_{A})\backslash \{0\}; J_{A}(u)=\displaystyle \frac{1}{2} \int_{\mathbb{R}^N} |u|^{2}\,dx  \right\}
$$
and
$$
D(J_{A})=\left\{u \in H^{1}(\mathbb{R}^N)\,:\, J_{A}(u)<+\infty \right\}.
$$

The main result of this paper is the following:

\begin{theorem}\label{teorema}
Suppose that $V$ satisfies $(V1)-(V3)$. If
 \begin{itemize}
	\item[\rm ($V4$)]
	$$
	m(V(0))\geq 2m(c_{0})\quad \text{and}\quad c_{1}\leq c_{0}+\frac{3}{10}c_{2},
	$$
\end{itemize}
 where $c_{2}=\min\{1, c_{0}\}$, then there is $\epsilon_*>0$ such that equation (\ref{1}) has a positive solution for all $\epsilon \in (0, \epsilon_*)$.
\end{theorem}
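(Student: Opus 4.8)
The plan is to produce a solution of (\ref{1'}) as a critical point of $J_\epsilon=\Phi_\epsilon+\Psi$ in Szulkin's sense --- a point $u$ with $\langle\Phi_\epsilon'(u),v-u\rangle+\Psi(v)-\Psi(u)\ge 0$ for all $v\in H^1(\mathbb{R}^N)$ --- obtained from a minimax that reflects the saddle geometry of $V$. I would begin with the constant-coefficient problem (\ref{constant}): its positive ground state is the explicit Gaussian $w_A(x)=e^{(N+A)/2}e^{-|x|^2/2}$, whence $m(A)=\frac12\pi^{N/2}e^{N+A}$, so that $A\mapsto m(A)$ is continuous and increasing. In particular the second inequality in $(V4)$ delivers the strict gap $m(c_1)<2m(c_0)$, the threshold that will drive the compactness argument. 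I would also record the comparison estimate that, as $\epsilon\to0$, $J_\epsilon$ evaluated on a suitably rescaled translate of $w_{c_0}$ centered at $y/\epsilon$ is governed by $J_{V(y)}$, so that $\max_{t\ge0}J_\epsilon(t\,w_{c_0}(\cdot-y/\epsilon))=m(V(y))+o(1)$.

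Next I would construct the minimax level. Using the splitting $\mathbb{R}^N=X\oplus Y$ and following \cite{DFM,AlvesMiyagaki}, fix $R>0$ large enough that $(V1)$ makes $V$ close to $c_0$ on $\partial B_R\cap X$, take the base map $\gamma_0(y)$ to be a rescaled translate of $w_{c_0}$ centered at $y/\epsilon$ for $y\in\overline{B_R}\cap X$, and set
$$
c_\epsilon=\inf_{\gamma\in\Gamma_\epsilon}\ \max_{y\in\overline{B_R}\cap X}J_\epsilon(\gamma(y)),
$$
where $\Gamma_\epsilon$ is the class of continuous maps $\gamma:\overline{B_R}\cap X\to\{J_\epsilon<+\infty\}$ agreeing with $\gamma_0$ on $\partial B_R\cap X$. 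The saddle condition $(V1)$, which makes $V>c_0$ on the cone $Y_\lambda$ while $V\to c_0$ along $X$, provides a linking between the $X$-ball and the $Y$-directions, forcing $\liminf_{\epsilon\to0}c_\epsilon\ge m(c_0)$, while the base family gives $\limsup_{\epsilon\to0}c_\epsilon\le m(c_1)$. Hence for small $\epsilon$ one has $m(c_0)\le c_\epsilon\le m(c_1)+o(1)<2m(c_0)$.

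The hard part is the compactness at the level $c_\epsilon$, because (\ref{1'}) is posed on all of $\mathbb{R}^N$, $\Psi$ is merely convex and lower semicontinuous, and the saddle potential lets sequences drift along $X$. I would first show that Szulkin--Palais--Smale sequences at level $c_\epsilon$ are bounded in $H^1(\mathbb{R}^N)$, using (\ref{eq2})--(\ref{eq5}) and the sign of the logarithmic term; then, by a Lions concentration-compactness argument adapted to the Szulkin setting, rule out the two failure modes. Splitting into two or more bumps is excluded since it would force the energy to be at least $2m(c_0)$, above $c_\epsilon$; and escape of mass to infinity along $X$ is prevented by combining the barrier in $(V1)$ with the Palais--Smale condition $(V3)$ on $V$, which confines the concentration point to a compact subset of $X$ and keeps the level strictly above $m(c_0)$. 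The first inequality in $(V4)$ enters here to keep the concentration away from the saddle point $0$ and to certify that the limiting profile is a single genuine ground-state bump. This produces a nontrivial $u_\epsilon$ with $\langle\Phi_\epsilon'(u_\epsilon),v-u_\epsilon\rangle+\Psi(v)-\Psi(u_\epsilon)\ge0$ for all $v$ and $J_\epsilon(u_\epsilon)=c_\epsilon\ge m(c_0)>0$.

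Finally I would upgrade $u_\epsilon$ to a positive solution. Since $F_1$ and $F_2$ are even, the whole construction may be carried out with $|u|$ in place of $u$, so $u_\epsilon\ge0$; elliptic regularity applied to $-\Delta u_\epsilon+V(\epsilon x)u_\epsilon=u_\epsilon\log u_\epsilon^2$ (whose right-hand side is controlled via (\ref{eq5}) and the decay of $u_\epsilon$) yields smoothness, and the strong maximum principle gives $u_\epsilon>0$. Undoing the scaling $x\mapsto x/\epsilon$ turns $u_\epsilon$ into a positive solution of (\ref{1}) for every $\epsilon\in(0,\epsilon_*)$, which is precisely the claim of Theorem \ref{teorema}.
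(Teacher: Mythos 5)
Your overall architecture --- Szulkin's framework, a minimax over continuous maps from $\overline{B}_R(0)\cap X$ that agree on the boundary with a family of translated ground states, compactness strictly below $2m(c_0)$, and positivity at the end --- matches the paper's. But there is a genuine gap at the decisive step: you only assert $\liminf_{\epsilon\to 0}c_\epsilon\ge m(c_0)$, attributing it to ``a linking between the $X$-ball and the $Y$-directions'' with no mechanism. That inequality is not enough. By $(V1)$ the boundary values of the base map satisfy $\sup_{x\in\partial B_R(0)\cap X}J_\epsilon(\Phi_\epsilon(x))\le m(c_0)+\delta$ for suitable $R$, so unless you produce a \emph{uniform strict} gap $c_\epsilon\ge m(c_0)+\sigma/2$ with $\sigma>0$ independent of $\epsilon$, the minimax level may collapse onto the boundary level and no deformation argument produces a critical point; moreover the Palais--Smale property (Lemma \ref{le22}) is only established on the window $(m(c_0)+\sigma/2,\,2m(c_0)-\sigma)$, so you must place $c_\epsilon$ strictly inside it. The entire content of the saddle hypothesis is spent exactly here: one introduces the barycenter $\beta(u)=\big(\int \frac{x}{|x|}|u|^2dx\big)/\int|u|^2dx$, shows by a degree/homotopy argument (Corollary \ref{corolario2} and Lemma \ref{te}) that every admissible $h$ must meet the set $\Upsilon_r$ of functions with barycenter in $Y$, and proves (Lemma \ref{vicente}(i), which rests on the compactness statement of Lemma \ref{vice}) that on $\mathcal{B}_\epsilon=\{u\in\mathcal{N}_\epsilon:\beta(u)\in Y\}$ the energy is bounded below by $m(c_0)+\sigma$, because a minimizing sequence there would concentrate at points $z_n$ with $z_n/|z_n|\in Y_\lambda$, where $\inf V>c_0$ strictly. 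None of this appears in your proposal, and I do not see how to close your argument without it.

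Two secondary remarks. First, your use of the explicit Gausson $w_A=e^{(N+A)/2}e^{-|x|^2/2}$, giving $m(A)=\tfrac12\pi^{N/2}e^{N+A}$ and hence $m(c_1)\le e^{3/10}m(c_0)<2m(c_0)$, is a legitimate and arguably cleaner route to the upper estimate than the paper's direct computation $\sup_{x\in X}J_\epsilon(\Phi_\epsilon(x))\le m(c_0)+\tfrac35 m(c_0)$ in Lemma \ref{vicente}(ii), but it relies on knowing that the Gausson is the ground state, which must be cited. Second, passing to a nonnegative solution by ``running the construction with $|u|$'' is not routine in this non-smooth minimax setting; the paper instead observes that a sign-changing critical point at level $C^*$ would cost at least $2m(c_0)$, which is excluded since $C^*<2m(c_0)-\sigma$. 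Your compactness paragraph also compresses the hardest analysis: showing that a drifting bump forces $\nabla V(\epsilon_n z_n)\to 0$ (so that $(V3)$ applies) requires the test-function argument with $\partial\phi_k/\partial x_i$ and the second-derivative bounds in $(V2)$, and the vanishing of the weak limit is where $m(V(0))\ge 2m(c_0)$ is actually consumed.
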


Theorem \ref{teorema} is inspired from \cite{Alves, AlvesMiyagaki,DFM}, however we are working with the logarithmic Schr\"{o}dinger equation, whose the energy functional associated is not continuous, for this reason,  some estimates  for this problem are also very delicate and different from those used in the Schr\"{o}dinger equation (\ref{NLS}). The reader is invited to see that the way how we apply the deformation lemma in Section 3 is different of that explored in \cite{Alves, AlvesMiyagaki,DFM}, because in our paper the Nehari set is not a manifold, and so, new arguments have been developed to overcome this difficulty. The plan of the paper is as follows: In Section 2 we show some estimates that do not appear in \cite{Alves, AlvesMiyagaki,DFM} and prove a technical result. In Section 3  we apply the deformation lemma to prove Theorem 1.1.

\vspace{0.5 cm}

\noindent \textbf{Notation:} From now on in this paper, otherwise mentioned, we use the following notations:
\begin{itemize}
	\item $B_r(u)$ is an open ball centered at $u$ with radius $r>0$, $B_r=B_r(0)$.
	
	\item If $g$ is a measurable function, the integral $\int_{\mathbb{R}^N}g(z)dz$ will be denoted by $\int g(z)dz$.
	
	\item   $C$ denotes any positive constant, whose value is not relevant.
	
	\item  $|\,\,\,|_p$ denotes the usual norm of the Lebesgue space $L^{p}(\mathbb{R}^N)$, for $p \in [1,+\infty]$.
	
	\item  $H_c^{1}(\mathbb{R}^N)=\{u \in H^{1}(\mathbb{R}^N)\,:\, u \,\, \mbox{has compact support}\, \}.$
	
	\item $o_{n}(1)$ denotes a real sequence with $o_{n}(1)\to 0$ as $n \to +\infty$.
	
	\item $2^*=\frac{2N}{N-2}$ if $N \geq 3$ and $2^*=+\infty$ if $N=1,2$.
\end{itemize}

\section{Technical results}\label{tpm}

We begin this section recalling some definitions that can be found in \cite{szulkin}.

\begin{definition} \label{definicao}
	
	Let $E$ be a Banach space, $E'$ be the dual space of $E$ and $\langle \cdot,\cdot \rangle$ be the duality paring between $E'$ and $E$. Let $J:E \to \mathbb{R}$ be a functional of the form $J(u)=\Phi(u)+\Psi(u)$, where $\Phi \in C^{1}(E,\mathbb{R})$ and $\Psi$ is convex and lower semicontinuous. Let us list some definitions:

	\noindent (i) The sub-differential $\partial J(u)$ of the functional $J$ at a point $u \in E$ is the following set
	
	\begin{equation}\label{subdif}
	\{w \in E': \langle \Phi'(u), v-u \rangle+\Psi(v)-\Psi(u)\geq \langle w, v-u\rangle, \ \forall v \in E\}.
	\end{equation}
	
	\noindent (ii) A critical point of $J$ is a point $u \in E$ such that $J(u)< + \infty$ and
	$0 \in \partial J(u)$,\textit{ i.e.}
	
	\begin{equation}\label{critical}
	\langle \Phi'(u), v-u\rangle+\Psi(v)-\Psi(u)\geq 0, \ \forall v \in E.
	\end{equation}
	
	\noindent (iii) A Palais-Smale sequence at level $d$ for $J$ is a sequence $(u_n)\subset E$ such that $J(u_n)\rightarrow d$ and there is a numerical sequence $\tau_n \rightarrow 0^+$ with
	\begin{equation}\label{psequence}
	\langle \Phi'(u_n), v-u_n\rangle+\Psi(v)-\Psi(u_n)\geq -\tau_n||v-u_n||, \ \forall v \in E.
	\end{equation}

	\noindent (iv) The functional $J$ satisfies the Palais-Smale condition at level $d$ $((PS)_d$ condition, for short$)$ if all Palais-Smale sequences at level $d$ has a convergent subsequence.
	
	\noindent (v) The effective domain of $J$ is the set $D(J)=\{u \in E: J(u)< +\infty\}.$
	
\end{definition}
To proceed further we gather and state below some useful results that leads to a better understanding of the problem and of its particularities. In what follows, for each $u \in D(J_\epsilon)$, we set the functional $J'_\epsilon(u):H_c^{1}(\mathbb{R}^N) \to \mathbb{R}$ given by
$$
\langle J'_\epsilon(u),z\rangle=\langle \Phi_\epsilon'(u),z\rangle-\int F'_1(u)z\,dx, \quad \forall z \in H_c^{1}(\mathbb{R}^N)
$$
and define
$$
\|J'_\epsilon(u)\|=\sup\left\{\langle J'_\epsilon(u),z\rangle\,:\, z \in H_c^{1}(\mathbb{R}^N) \quad \mbox{and} \quad \|z\|_\epsilon \leq 1 \right\}.
$$
If $\|J'_\epsilon(u)\|$ is finite, then $J_\epsilon'(u)$ may be extended to a bounded operator in $ H^{1}(\mathbb{R}^N)$, and so,  it can be seen as an element of $(H^{1}(\mathbb{R}^N))'$.

\begin{lemma} \label{lema} Let $J_\epsilon$ satisfy  (\ref{funcional}), then:\\
	\noindent (i) If $u \in D(J_{\epsilon})$ is a critical point of $J_\epsilon$, then
	$$
	\langle \Phi_\epsilon'(u), v-u \rangle +\Psi(v)-\Psi(u)\geq 0, \quad  \forall v \in H^{1}(\mathbb{R}^N),
	$$
	or equivalently
	$$
	\int \nabla u \nabla (v-u)\, dx + \int (V(\epsilon x)+1)u(v-u)\,dx + \int F_1(v)\, dx - \int F_1(u)\, dx \geq \int F'_2(u)(v-u)\,dx,  \forall v \in H^{1}(\mathbb{R}^N).
	$$
	\noindent (ii) For each $u \in D(J_\epsilon)$ such that $\|J'_\epsilon(u)\|< +\infty$, we have $\partial J_\epsilon(u) \not= \emptyset$, that is, there is $w \in  (H^{1}(\mathbb{R}^N))'$, which is denoted by $w=J'_\epsilon(u)$, such that
	$$
	\langle \Phi_\epsilon'(u), v-u \rangle + \int F_1(v)\, dx - \int F_1(u)\, dx \geq \langle w,v-u \rangle, \quad  \forall v \in H^{1}(\mathbb{R}^N), \,\, ( \mbox{see} \, \cite{sz2,TZ} )
	$$

	\noindent 	(iii) If a function $u \in D(J_\epsilon)$ is a critical point of $J_\epsilon$, then $u$ is a solution of (\ref{1'}) ( (i) in Lemma 2.4, \cite{cs}).
	
	\noindent 	(iv) If  $(u_n)\subset H^{1}(\mathbb{R}^N)$ is a Palais-Smale sequence, then
	\begin{equation} \label{ps}
	\langle J'_\epsilon(u_n), z \rangle=o_n(1)\|z\|_\epsilon , \quad \forall z \in H_c^{1}(\mathbb{R}^N).
	\end{equation}
	
	(see (ii) in Lemma 2.4, \cite{cs}).
	
	\noindent	(v) If $\Omega$ is a bounded domain with regular boundary, then $\Psi$ (and hence $J_\epsilon$) is of class $C^1$ in $H^{1}(\Omega)$ (Lemma 2.2 in \cite{sz}). More precisely, the functional
	$$
	\Psi(u)=\int_{\Omega}F_1(u)\,dx, \quad \forall u \in H^{1}(\Omega)
	$$
	belongs to $C^{1}(H^{1}(\Omega),\mathbb{R})$. 	
\end{lemma}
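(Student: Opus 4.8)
The plan is to treat the five assertions according to their nature: (i) and (iii) are concrete specializations of Szulkin's abstract framework to the present pair $(\Phi_\epsilon,\Psi)$ and can be obtained by direct computation, whereas (ii), (iv) and (v) are quoted from \cite{sz2,TZ,cs,sz} and I would only indicate the mechanism behind each. For (i), I would start from the definition of critical point, equation (\ref{critical}) with $E=H^1(\mathbb{R}^N)$: since $0\in\partial J_\epsilon(u)$, one has $\langle\Phi_\epsilon'(u),v-u\rangle+\Psi(v)-\Psi(u)\ge 0$ for every $v$. Because $\Phi_\epsilon$ is $C^1$ (as recorded after (\ref{eq5})), its derivative reads $\langle\Phi_\epsilon'(u),\varphi\rangle=\int\nabla u\cdot\nabla\varphi+\int(V(\epsilon x)+1)u\varphi-\int F_2'(u)\varphi$, and by (\ref{psi}) we have $\Psi(v)-\Psi(u)=\int F_1(v)-\int F_1(u)$. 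Substituting these identities with $\varphi=v-u$ and transferring the $F_2'$-term to the right-hand side produces exactly the explicit inequality stated in (i); thus (i) is purely algebraic bookkeeping.

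For (iii), I would insert the admissible choices $v=u\pm t\varphi$ with $t>0$ and $\varphi\in C_0^\infty(\mathbb{R}^N)\subset H_c^1(\mathbb{R}^N)$ into the inequality of (i), divide by $t$, and let $t\to 0^+$. The convexity and $C^1$-regularity of $F_1$ recorded in (\ref{eq1})--(\ref{eq2}) make the difference quotients $t^{-1}(F_1(u+t\varphi)-F_1(u))$ monotone, so the limit may be carried inside the integral to give $\int F_1'(u)\varphi$. Doing this for both signs of $t$ yields the two opposite inequalities, hence the equality $\int\nabla u\cdot\nabla\varphi+\int(V(\epsilon x)+1)u\varphi+\int F_1'(u)\varphi=\int F_2'(u)\varphi$. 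Recalling from (\ref{efes}) that $F_2'(s)-F_1'(s)=s\log s^2+s$, the spurious terms $\int u\varphi$ cancel and one is left with the weak formulation of (\ref{1'}), which is precisely Lemma 2.4(i) in \cite{cs}.

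For the remaining parts I would argue as follows. In (ii), the hypothesis $\|J_\epsilon'(u)\|<+\infty$ says the linear map $z\mapsto\langle J_\epsilon'(u),z\rangle$ is bounded on the dense subspace $H_c^1(\mathbb{R}^N)$, so it extends uniquely to some $w\in(H^1(\mathbb{R}^N))'$; combining this extension with the convexity inequality $F_1(v)-F_1(u)\ge F_1'(u)(v-u)$ yields the subdifferential relation, exactly as in \cite{sz2,TZ}. Part (iv) is the observation that a Palais-Smale sequence in the sense of (\ref{psequence}) produces, upon testing only against compactly supported $z$ and repeating the one-sided limit of (iii) with the error term $-\tau_n\|v-u_n\|$, the estimate (\ref{ps}); this is Lemma 2.4(ii) in \cite{cs}. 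Finally (v) is Lemma 2.2 in \cite{sz}: on a bounded regular domain $\Omega$ the map $u\mapsto\int_\Omega F_1(u)$ is Gateaux differentiable with continuous derivative, essentially because $u^2\log u^2$ and its derivative are controlled by the Sobolev embedding on the bounded set, so the pathology that destroys differentiability on $\mathbb{R}^N$ disappears.

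The only genuinely delicate point is the passage to the limit in (iii), and correspondingly the verification of the subdifferential inequality in (ii): since $\Psi$ is merely convex and lower semicontinuous on $H^1(\mathbb{R}^N)$ rather than differentiable, one cannot differentiate $\Psi$ directly and must instead exploit the monotonicity of the difference quotients granted by the convexity of $F_1$. All other steps are either elementary manipulations of the $C^1$ functional $\Phi_\epsilon$ or direct citations, so I expect no further obstruction.
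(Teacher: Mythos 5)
Your proposal is correct and follows essentially the same route as the paper, which states this lemma without proof as a collection of facts from Szulkin's framework and the cited works \cite{cs,sz,sz2,TZ}: part (i) is indeed just the definition (\ref{critical}) unpacked via (\ref{fi})--(\ref{psi}), and your derivation of (iii) from (i) via the monotone difference quotients of the convex $F_1$ and the identity $F_2'(s)-F_1'(s)=s\log s^2+s$ is the standard argument of Lemma 2.4 in \cite{cs}. The remaining parts are correctly attributed and your sketches of their mechanisms (density of $H_c^1(\mathbb{R}^N)$ plus convexity for (ii), the one-sided limit with the $-\tau_n\|v-u_n\|$ error for (iv), and the improved integrability on bounded domains for (v)) match the cited proofs.
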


As a consequence of the above proprieties, we have  the following results whose the proofs can be found in \cite{AlvesdeMorais}.
\begin{lemma}
	If $u \in D(J_\epsilon)$ and $\|J'_\epsilon(u)\|< +\infty$, then $F^{'}_1(u)u \in L^1(\mathbb{R}^{N})$.
\end{lemma}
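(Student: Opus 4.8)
The plan is to combine the sign condition on $F_1'(s)s$ with the finiteness of $\|J'_\epsilon(u)\|$, reducing matters to compactly supported test functions via a spatial cut-off and then passing to the limit by monotone convergence. First I would record the sign information: by (\ref{eq2}) we have $F'_1(s)s\geq 0$ for every $s\in\mathbb{R}$, so $x\mapsto F'_1(u(x))u(x)$ is nonnegative, and it suffices to exhibit an $R$-independent finite bound for $\int F'_1(u)u\,dx$ obtained by truncation.

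Next I would fix cut-offs $\xi_R\in C^\infty_0(\mathbb{R}^N)$ with $0\leq\xi_R\leq 1$, $\xi_R\equiv 1$ on $B_R$, $\mathrm{supp}\,\xi_R\subset B_{2R}$ and $|\nabla\xi_R|\leq C/R$, and take the admissible test function $z=u\xi_R\in H^1_c(\mathbb{R}^N)$. Since $\xi_R$ has compact support and $F'_1(u)u$ is locally integrable — on $B_{2R}$ it is bounded where $|u|<\delta$ (because $-s^2\log s^2-s^2$ stays bounded near the origin) and dominated by $C(u^2+|u|)\in L^1(B_{2R})$ where $|u|\geq\delta$; alternatively one invokes Lemma \ref{lema}(v) — the number $\int F'_1(u)u\xi_R\,dx$ is finite, so from the definition of $\langle J'_\epsilon(u),\cdot\rangle$ I may legitimately write
\[
\int F'_1(u)u\xi_R\,dx=\langle\Phi'_\epsilon(u),u\xi_R\rangle-\langle J'_\epsilon(u),u\xi_R\rangle .
\]

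I would then estimate the two terms on the right uniformly in $R$. Expanding $\nabla(u\xi_R)=\xi_R\nabla u+u\nabla\xi_R$ gives
\[
\langle\Phi'_\epsilon(u),u\xi_R\rangle=\int|\nabla u|^2\xi_R\,dx+\int u\,\nabla u\cdot\nabla\xi_R\,dx+\int(V(\epsilon x)+1)u^2\xi_R\,dx-\int F'_2(u)u\xi_R\,dx ,
\]
where the first and third integrals are dominated by $\|\nabla u\|_2^2$ and $C\|u\|_2^2$ (using that $V$ is bounded by $(V2)$ and $V+1>0$ since $c_0>-1$), the gradient cross term is $\leq\frac{C}{R}\|u\|_2\|\nabla u\|_2$, and the $F'_2$-term is controlled by $C\int|u|^p\,dx<+\infty$ via (\ref{eq5}) and the Sobolev embedding $H^1\hookrightarrow L^p$. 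For the second term the defining inequality of $\|J'_\epsilon(u)\|$ yields $|\langle J'_\epsilon(u),u\xi_R\rangle|\leq\|J'_\epsilon(u)\|\,\|u\xi_R\|_\epsilon$, and a direct computation shows $\|u\xi_R\|_\epsilon\leq C\|u\|_{H^1}$ uniformly in $R$. Hence $\int F'_1(u)u\xi_R\,dx\leq C$ with $C$ independent of $R$.

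Finally, since $F'_1(u)u\geq 0$ and $\xi_R\uparrow 1$ pointwise as $R\to\infty$, the monotone convergence theorem gives $\int F'_1(u)u\,dx=\lim_{R\to\infty}\int F'_1(u)u\xi_R\,dx\leq C<+\infty$, which is the assertion. The main obstacle I anticipate is essentially bookkeeping rather than deep: one must verify that each $\int F'_1(u)u\xi_R\,dx$ is genuinely finite before splitting the identity (so as not to create an $\infty-\infty$), and that the bounds on $\|u\xi_R\|_\epsilon$ and on $\langle\Phi'_\epsilon(u),u\xi_R\rangle$ are uniform in $R$; the sign property (\ref{eq2}) is precisely what legitimizes the final monotone-convergence step and is the crux of the argument.
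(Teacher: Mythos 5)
Your proof is correct: the paper gives no proof of this lemma (it defers to \cite{AlvesdeMorais}), and the argument there is essentially yours --- test $J'_\epsilon(u)$ against $u\xi_R$ for a cut-off $\xi_R$, bound $\langle \Phi'_\epsilon(u),u\xi_R\rangle$ and $\|u\xi_R\|_\epsilon$ uniformly in $R$, and use the sign condition $F'_1(s)s\geq 0$ from (\ref{eq2}) to pass to the limit by monotone convergence (or Fatou). All the supporting estimates you supply (local integrability of $F'_1(u)u$ before splitting the identity, the $R^{-1}$ decay of the gradient cross term, the control of the $F'_2$ term via (\ref{eq5}) and Sobolev embedding) check out.
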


An immediate consequence of the last lemma is the following.

\begin{corollary} \label{C1} For each $u \in D(J_\epsilon) \setminus \{0\}$ with $\|J'_\epsilon(u)\|< +\infty$, we have that
	$$
	J'_\epsilon(u)u=\int (|\nabla u|^{2}+V(\epsilon x)|u|^2)\,dx-\int u^{2} \log u^2\,dx
	$$
	and
	$$
	J_{\epsilon}(u)-\frac{1}{2}J'_{\epsilon}(u)u=\frac{1}{2}\int |u|^{2}\, dx.
	$$

\end{corollary}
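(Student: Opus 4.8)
The plan is to read off both identities directly from the decomposition (\ref{funcional})--(\ref{psi}) together with the pointwise relation (\ref{efes}), the only genuine analytic input being the integrability statement supplied by the preceding lemma. First I would recall that, by the definition of $J'_\epsilon(u)$ given just before Lemma \ref{lema} and the expression (\ref{fi}) for $\Phi_\epsilon$, one has
\[
\langle \Phi'_\epsilon(u), u\rangle = \int\big(|\nabla u|^2+(V(\epsilon x)+1)|u|^2\big)\,dx - \int F'_2(u)u\,dx,
\]
so that
\[
J'_\epsilon(u)u = \int\big(|\nabla u|^2+(V(\epsilon x)+1)|u|^2\big)\,dx - \int F'_2(u)u\,dx - \int F'_1(u)u\,dx.
\]
Here the hypothesis $\|J'_\epsilon(u)\|<+\infty$ guarantees that $J'_\epsilon(u)$ extends to an element of $(H^{1}(\mathbb{R}^N))'$, and the previous lemma gives $F'_1(u)u\in L^1(\mathbb{R}^N)$, which is exactly what is needed to evaluate the extended functional at $z=u$ (a function not necessarily lying in $H_c^{1}(\mathbb{R}^N)$) by density and to give meaning to the term $\int F'_1(u)u\,dx$.

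The key algebraic step is to differentiate (\ref{efes}): since $F_1,F_2\in C^1(\mathbb{R},\mathbb{R})$ by (\ref{eq1}) and $F_2(s)-F_1(s)=\tfrac12 s^2\log s^2$ on all of $\mathbb{R}$, we obtain $F'_2(s)-F'_1(s)=s\log s^2+s$, whence $F'_2(s)s - F'_1(s)s = s^2\log s^2 + s^2$ for every $s\in\mathbb{R}$ (the left side vanishing at $s=0$). Substituting this into the expression above, the extra $+|u|^2$ coming from the factor $(V(\epsilon x)+1)$ cancels the $u^2$ produced by $F'_2(u)u+F'_1(u)u$, and one is left with
\[
J'_\epsilon(u)u = \int\big(|\nabla u|^2+V(\epsilon x)|u|^2\big)\,dx - \int u^2\log u^2\,dx,
\]
which is the first identity. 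The integrability of $u^2\log u^2$ used here follows from $u\in D(J_\epsilon)$: this forces $\Psi(u)=\int F_1(u)\,dx<+\infty$, while $F_2(u)\in L^1(\mathbb{R}^N)$ by the growth bound (\ref{eq5}) and the Sobolev embedding, so that $u^2\log u^2=2(F_2(u)-F_1(u))\in L^1(\mathbb{R}^N)$.

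For the second identity I would use (\ref{funcional}) together with (\ref{efes}) to rewrite $J_\epsilon(u)=\tfrac12\int(|\nabla u|^2+(V(\epsilon x)+1)|u|^2)\,dx-\tfrac12\int u^2\log u^2\,dx$, and then subtract $\tfrac12 J'_\epsilon(u)u$ using the first identity. The gradient terms and the logarithmic terms cancel in pairs, while the potential terms leave
\[
\tfrac12\int(V(\epsilon x)+1)|u|^2\,dx-\tfrac12\int V(\epsilon x)|u|^2\,dx=\tfrac12\int|u|^2\,dx,
\]
which is the claimed formula.

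The computation itself is entirely routine; the only point requiring care—and the reason the statement is phrased as an immediate consequence of the preceding lemma—is the justification that one may legitimately test $J'_\epsilon(u)$ against $u$ and that each of the integrals $\int F'_2(u)u$, $\int F'_1(u)u$, $\int u^2\log u^2$ and $\int|u|^2$ is finite. All of these reduce to two facts already available: $F'_1(u)u\in L^1(\mathbb{R}^N)$ from the lemma, together with the membership $u\in D(J_\epsilon)\subset H^{1}(\mathbb{R}^N)$ and the estimate (\ref{eq5}). Once finiteness is secured, both identities follow from the linear combinations indicated above.
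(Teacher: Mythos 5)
Your overall strategy is the right one and is essentially the only one available: the paper gives no proof of this corollary (it is stated as an immediate consequence of the preceding lemma, with the details deferred to \cite{AlvesdeMorais}), and the intended argument is exactly the direct computation you describe, using the splitting (\ref{efes}) together with the integrability $F_1'(u)u\in L^1(\mathbb{R}^N)$ to justify testing the extension of $J'_\epsilon(u)$ against $u$ and to make each integral finite.

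There is, however, a sign inconsistency that makes your displayed chain of identities false as written. You take
\[
J'_\epsilon(u)u=\int\big(|\nabla u|^2+(V(\epsilon x)+1)|u|^2\big)\,dx-\int F_2'(u)u\,dx-\int F_1'(u)u\,dx,
\]
copying the formula for $\langle J'_\epsilon(u),z\rangle$ displayed before Lemma \ref{lema}, and then invoke $F_2'(s)s-F_1'(s)s=s^2\log s^2+s^2$. But the combination in your display is $-\big(F_2'(u)u+F_1'(u)u\big)$, not $-\big(F_2'(u)u-F_1'(u)u\big)$, so the cancellation you describe does not occur: an extra term $-2\int F_1'(u)u\,dx$, nonzero in general, would survive. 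The correct expression carries a plus sign on the $F_1'$ term,
\[
J'_\epsilon(u)u=\int\big(|\nabla u|^2+(V(\epsilon x)+1)|u|^2\big)\,dx-\int F_2'(u)u\,dx+\int F_1'(u)u\,dx,
\]
which is what is consistent with $J_\epsilon=\Phi_\epsilon+\Psi$, $\Psi(u)=\int F_1(u)\,dx$, and with the identity $\|u\|_\epsilon^2+\int F_1'(u)u\,dx=J'_\epsilon(u)u+\int F_2'(u)u\,dx$ actually used in the proof of Lemma \ref{le22}; the minus sign in the paper's displayed definition of $\langle J'_\epsilon(u),z\rangle$ appears to be a misprint (compare \cite{cs,sz2}). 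With the corrected sign, $-\int\big(F_2'(u)-F_1'(u)\big)u\,dx=-\int\big(u^2\log u^2+u^2\big)\,dx$, the $+|u|^2$ from the potential term cancels, and both identities follow exactly as you indicate. The remaining points of your write-up --- the role of the preceding lemma in giving meaning to $\int F_1'(u)u\,dx$ and in evaluating the extended functional at $u$ by density, and the $L^1$ membership of $u^2\log u^2$ via $F_1(u),F_2(u)\in L^1(\mathbb{R}^N)$ --- are correct.
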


\begin{corollary} \label{C2} If $(u_n) \subset H^{1}(\mathbb{R}^N)$ is a $(PS)$ sequence for $J_\epsilon$, then $J_\epsilon'(u_n)u_n=o_n(1)\|u_n\|_\epsilon$. If $(u_n)$ is bounded, we have
	$$
	J_{\epsilon}(u_n)=J_{\epsilon}(u_n)-\frac{1}{2}J_{\epsilon}'(u_n)u_n+o_n(1)\|u_n\|_\epsilon=\frac{1}{2}\int |u_n|^{2}\, dx+o_n(1)\|u_n\|_\epsilon, \quad \forall n \in \mathbb{N}.
	$$
\end{corollary}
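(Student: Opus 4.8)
The plan is to obtain Corollary \ref{C2} as a short consequence of item (iv) of Lemma \ref{lema} and the exact identity in Corollary \ref{C1}, the only genuinely delicate point being the passage from compactly supported test functions to the element $u_n$ itself. I would treat the estimate $J'_\epsilon(u_n)u_n=o_n(1)\|u_n\|_\epsilon$ first, and then derive the energy formula from it together with Corollary \ref{C1}.

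Since $(u_n)$ is a Palais--Smale sequence, item (iv) of Lemma \ref{lema} furnishes a numerical sequence $\eta_n\to 0$ with $|\langle J'_\epsilon(u_n),z\rangle|\leq \eta_n\|z\|_\epsilon$ for every $z\in H_c^{1}(\mathbb{R}^N)$. Taking the supremum over those $z$ with $\|z\|_\epsilon\leq 1$ gives $\|J'_\epsilon(u_n)\|\leq \eta_n<+\infty$, so $\|J'_\epsilon(u_n)\|$ is finite for every $n$. The main step is then to evaluate $J'_\epsilon(u_n)$ at $z=u_n$, which does not have compact support; here I would invoke the extension property recorded just before Lemma \ref{lema}. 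Because $\|J'_\epsilon(u_n)\|$ is finite and $H_c^{1}(\mathbb{R}^N)$ is dense in $H^{1}(\mathbb{R}^N)$, the functional $J'_\epsilon(u_n)$ extends uniquely to a bounded linear functional on $H^{1}(\mathbb{R}^N)$ whose operator norm still equals $\|J'_\epsilon(u_n)\|\leq \eta_n$. Applying this extension to $u_n\in H^{1}(\mathbb{R}^N)$ yields $|J'_\epsilon(u_n)u_n|\leq \|J'_\epsilon(u_n)\|\,\|u_n\|_\epsilon\leq \eta_n\|u_n\|_\epsilon$, that is, $J'_\epsilon(u_n)u_n=o_n(1)\|u_n\|_\epsilon$ (the case $u_n=0$ being trivial). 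This proves the first assertion.

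For the second assertion, each $u_n$ lies in $D(J_\epsilon)$ because $J_\epsilon(u_n)\to d$ is finite, and $\|J'_\epsilon(u_n)\|<+\infty$ was just shown, so Corollary \ref{C1} applies and delivers the exact identity $J_\epsilon(u_n)-\tfrac{1}{2}J'_\epsilon(u_n)u_n=\tfrac{1}{2}\int |u_n|^2\,dx$. Writing $J_\epsilon(u_n)=\bigl(J_\epsilon(u_n)-\tfrac{1}{2}J'_\epsilon(u_n)u_n\bigr)+\tfrac{1}{2}J'_\epsilon(u_n)u_n$ and substituting the first assertion into the last term produces the displayed chain of equalities, with the error term $o_n(1)\|u_n\|_\epsilon$ reducing to $o_n(1)$ once boundedness of $(u_n)$ is assumed. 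The only step I expect to require care is justifying the norm-preserving extension from $H_c^{1}(\mathbb{R}^N)$ to $H^{1}(\mathbb{R}^N)$ and its compatibility with the explicit expression for $J'_\epsilon(u_n)u_n$ in Corollary \ref{C1}; both rest on the finiteness of $\|J'_\epsilon(u_n)\|$ (which forces $F'_1(u_n)u_n\in L^1(\mathbb{R}^N)$ by Lemma \ref{lema}'s consequence) and on the density of $H_c^{1}(\mathbb{R}^N)$ in $H^{1}(\mathbb{R}^N)$.
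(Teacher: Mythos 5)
Your proposal is correct and takes essentially the route the paper intends: the paper states this corollary without a written proof (deferring to \cite{AlvesdeMorais}), and the intended argument is precisely yours --- the uniform bound $\|J'_\epsilon(u_n)\|\leq \tau_n\to 0$ coming from Lemma \ref{lema}(iv), the norm-preserving extension of $J'_\epsilon(u_n)$ to $(H^{1}(\mathbb{R}^N))'$ noted just before that lemma, and the exact identity of Corollary \ref{C1} (whose applicability, via Lemma 2.2, handles the compatibility point you flag). No gaps.
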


\begin{corollary} \label{C3} If $u \in H^{1}(\mathbb{R}^N)$ is a critical point of $J_\epsilon$ and $v \in H^{1}(\mathbb{R}^N)$ verifies $F'_1(u)v \in L^{1}(\mathbb{R}^N)$, then $J'_\epsilon(u)v=0$.
	
\end{corollary}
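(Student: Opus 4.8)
The plan is to identify $J'_\epsilon(u)v$ with the concrete bilinear expression
\[
J'_\epsilon(u)v=\int \big(\nabla u\cdot\nabla v+V(\epsilon x)uv\big)\,dx-\int uv\log u^2\,dx ,
\]
which is the natural extension, consistent with Corollary \ref{C1}, of the formula there from $v=u$ to a general $v$; then to check that this expression is well defined precisely because $F'_1(u)v\in L^1(\mathbb{R}^N)$, and finally to show that it vanishes by exhausting $v$ with compactly supported test functions, for which the identity is already available from the fact that $u$ solves (\ref{1'}). First I would record the weak solution property: by Lemma \ref{lema} (iii) the critical point $u$ solves (\ref{1'}), and by Lemma \ref{lema} (v) (local $C^1$ regularity of $J_\epsilon$ on bounded domains) together with a density argument, the identity $\int(\nabla u\cdot\nabla z+V(\epsilon x)uz)\,dx=\int uz\log u^2\,dx$ holds not only for $z\in C_0^\infty(\mathbb{R}^N)$ but for every $z\in H^1_c(\mathbb{R}^N)$; equivalently $\langle J'_\epsilon(u),z\rangle=0$ for all $z\in H^1_c(\mathbb{R}^N)$.

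The key integrability step comes next. Using the identity $F'_2(s)-F'_1(s)=s\log s^2+s$, I would write the logarithmic integrand as
\[
uv\log u^2=F'_2(u)v-F'_1(u)v-uv .
\]
Here $F'_2(u)v\in L^1$ because $|F'_2(u)|\le C|u|^{p-1}$ by (\ref{eq5}) with $u,v\in L^p$ (Hölder), $uv\in L^1$ because $u,v\in L^2$, and $F'_1(u)v\in L^1$ is exactly the hypothesis. Hence $uv\log u^2\in L^1(\mathbb{R}^N)$, so the displayed formula for $J'_\epsilon(u)v$ is a finite number whose three integrals all converge absolutely.

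For the limit step I would fix a cutoff $\chi_R\in C_0^\infty$ with $\chi_R\equiv1$ on $B_R$, $\mathrm{supp}\,\chi_R\subset B_{2R}$ and $|\nabla\chi_R|\le C/R$, and set $v_R=\chi_R v\in H^1_c(\mathbb{R}^N)$. Since $\langle J'_\epsilon(u),v_R\rangle=0$ by the first step, I would pass to the limit $R\to\infty$ in
\[
0=\int \chi_R\,\nabla u\cdot\nabla v\,dx+\int v\,\nabla u\cdot\nabla\chi_R\,dx+\int V(\epsilon x)\chi_R uv\,dx-\int \chi_R\,uv\log u^2\,dx .
\]
The first, third and fourth integrals converge to the corresponding integrals without $\chi_R$ by dominated convergence, the dominating functions $|\nabla u\cdot\nabla v|$, $|V(\epsilon x)uv|$ and $|uv\log u^2|$ all lying in $L^1$ (the last by the previous step). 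The second integral is controlled by $\tfrac{C}{R}\|v\|_{2}\|\nabla u\|_{2}\to0$. Passing to the limit yields exactly $J'_\epsilon(u)v=0$.

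The main obstacle is the integrability step: for a general $v\in H^1(\mathbb{R}^N)$ the product $uv\log u^2$ need not be integrable — this is the same defect of $C^1$ regularity that forced the decomposition $J_\epsilon=\Phi_\epsilon+\Psi$ — and it is precisely the assumption $F'_1(u)v\in L^1$ that repairs it, simultaneously making $J'_\epsilon(u)v$ meaningful and legitimizing the dominated convergence in the final step. The only other delicate point is the extension of the weak formulation from $C_0^\infty(\mathbb{R}^N)$ to $H^1_c(\mathbb{R}^N)$, which I would justify via the local smoothness of $J_\epsilon$ recorded in Lemma \ref{lema} (v).
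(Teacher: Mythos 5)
Your argument is correct. Note that the paper itself gives no proof of this corollary: it is stated, together with Corollaries \ref{C1} and \ref{C2}, with a pointer to \cite{AlvesdeMorais}, so there is no in-text argument to compare against; your truncation-plus-dominated-convergence proof is the standard one and is consistent with how these identities are established there. The two load-bearing points are exactly the ones you isolate: the identity $F_2'(s)-F_1'(s)=s\log s^2+s$ (from (\ref{efes})) together with (\ref{eq5}) and the hypothesis $F_1'(u)v\in L^1(\mathbb{R}^N)$ gives $uv\log u^2\in L^1(\mathbb{R}^N)$, which both makes $J_\epsilon'(u)v$ well defined (as $\langle\Phi_\epsilon'(u),v\rangle+\int F_1'(u)v\,dx$, matching Corollary \ref{C1}) and licenses the passage to the limit in $R$; and the vanishing of $\langle J_\epsilon'(u),z\rangle$ on $H_c^1(\mathbb{R}^N)$, which you could obtain even more directly from Lemma \ref{lema}(iv) applied to the constant Palais--Smale sequence $u_n\equiv u$, rather than via the density step through Lemma \ref{lema}(v) (which is also fine, since on a bounded domain $u\log u^2\in L^2_{\mathrm{loc}}$ makes the weak formulation continuous under $H^1$-approximation of compactly supported test functions).
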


Next we will prove some results that will be useful in the proof of Theorem  \ref{teorema}.

\begin{lemma}\label{boundedness}
For any $\epsilon>0$, all $(PS)$ sequences of  $J_{\epsilon}$ are bounded in $H^{1}(\mathbb{R}^N)$.
\end{lemma}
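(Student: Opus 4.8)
The plan is to argue by contradiction, feeding the Szulkin structure of $J_\epsilon$ together with the logarithmic Sobolev inequality. Let $(u_n)\subset H^1(\mathbb{R}^N)$ be a $(PS)$ sequence, so that $J_\epsilon(u_n)\to d$ for some $d\in\mathbb{R}$. First I would note that, by Lemma \ref{lema}(iv), $\|J'_\epsilon(u_n)\|=o_n(1)$, so in particular $\|J'_\epsilon(u_n)\|<+\infty$ and Corollaries \ref{C1}--\ref{C2} are available. Writing $R_n:=\|u_n\|_\epsilon$ and $t_n:=|u_n|_2^2$, Corollary \ref{C2} gives $J'_\epsilon(u_n)u_n=o_n(1)R_n$, and then the identity $J_\epsilon(u)-\tfrac12 J'_\epsilon(u)u=\tfrac12|u|_2^2$ of Corollary \ref{C1} yields the fundamental $L^2$-control
$$t_n=2J_\epsilon(u_n)-J'_\epsilon(u_n)u_n=2d+o_n(1)+o_n(1)R_n. \qquad (\star)$$

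Next I would extract the gradient information directly from the energy. Since $\|u\|_\epsilon^2=\int(|\nabla u|^2+(V(\epsilon x)+1)|u|^2)\,dx$ and $V\ge c_0>-1$, the norm $\|\cdot\|_\epsilon$ is equivalent to the usual $H^1$-norm and $\|\nabla u_n\|_2^2\le R_n^2$. From the definition of $J_\epsilon$ one has $R_n^2=2J_\epsilon(u_n)+\int u_n^2\log u_n^2\,dx=2d+o_n(1)+\int u_n^2\log u_n^2\,dx$, so the whole matter reduces to bounding the logarithmic term from above. Here lies the main obstacle: the naive estimate $\log u^2\le C_p|u|^{p-2}$ on $\{|u|\ge 1\}$ only produces $R_n^2\lesssim R_n^{p}$ with $p>2$, which is useless, while $(\star)$ alone controls only $t_n$. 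To overcome this I would invoke the logarithmic Sobolev inequality: for every $a>0$,
$$\int u^2\log u^2\,dx\le \frac{a^2}{\pi}\|\nabla u\|_2^2+\big(\log|u|_2^2-N(1+\log a)\big)|u|_2^2.$$
Choosing $a=\sqrt{\pi/2}$ so that $a^2/\pi=\tfrac12$ and using $\|\nabla u_n\|_2^2\le R_n^2$ gives, with $C:=N(1+\log\sqrt{\pi/2})$,
$$\tfrac12 R_n^2\le 2d+o_n(1)+(\log t_n-C)\,t_n. \qquad (\star\star)$$

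Finally I would close by contradiction. Suppose $R_n\to+\infty$ along a subsequence. Dividing $(\star)$ by $R_n$ shows $t_n/R_n\to0$, that is $t_n=o(R_n)$. Using the elementary bound $t\log t\le t^2$ valid for all $t>0$, the right-hand side of $(\star\star)$ is at most $2d+o_n(1)+t_n^2+|C|t_n$, and dividing $(\star\star)$ by $R_n^2$ gives
$$\tfrac12\le \frac{2d+o_n(1)}{R_n^2}+\Big(\frac{t_n}{R_n}\Big)^2+|C|\,\frac{t_n}{R_n}\cdot\frac{1}{R_n}\longrightarrow 0,$$
which is absurd. Hence $R_n$ remains bounded, and by the equivalence of $\|\cdot\|_\epsilon$ with the $H^1$-norm the sequence $(u_n)$ is bounded in $H^1(\mathbb{R}^N)$. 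The genuinely delicate point, which I expect to be the crux, is the interplay between $(\star)$ and $(\star\star)$: neither the $L^2$-control nor the log-Sobolev bound controls $\|u_n\|_\epsilon$ on its own, but combined they force the superlinear logarithmic term to be negligible relative to $R_n^2$, which is exactly what the growth of $u\log u^2$ makes non-obvious.
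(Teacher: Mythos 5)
Your proposal is correct and follows essentially the same route as the paper: the identity $|u_n|_2^2=2J_\epsilon(u_n)-J'_\epsilon(u_n)u_n=2d+o_n(1)+o_n(1)\|u_n\|_\epsilon$ to control the $L^2$-norm, followed by the logarithmic Sobolev inequality with the gradient coefficient chosen strictly below $1$ to absorb $\int u_n^2\log u_n^2\,dx$ into the energy. The only differences are bookkeeping: the paper takes $a^2/\pi=\tfrac14$ and bounds the remainder by $C(1+\|u_n\|)^{1+\xi}$ with $\xi\in(0,1)$, whereas you take $a^2/\pi=\tfrac12$ and close with the bound $t\log t\le t^2$ together with $|u_n|_2^2=o(\|u_n\|_\epsilon)$ in a contradiction argument.
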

\begin{proof}
Let $(u_{n})$ be a $(PS)_{d}$ sequence. Then
$$
\int |u_n|^2dx=2J_{\epsilon}(u_n)-J'_{\epsilon}(u_n)u_n=2d+o_n(1)+o_n(1)\|u_n\| \leq C+o_n(1)\|u_n\|,
$$
for some $C>0$. Consequently
\begin{equation}\label{rimsky}
|u_n|^2_{2}\leq  C+o_n(1)\|u_n\|.
\end{equation}
Now, let us employ the following logarithmic Sobolev inequality found in \cite{lieb},
\begin{equation}\label{xerazade0}
\int u^2 \log u^2dx \leq\frac{a^2}{\pi}|\nabla u|^2_{2}+ \big( \log |u|^2_{2}-N(1+\log a)\big)|u|^2_{2}
\end{equation}
for all $a>0$. Fixing $\frac{a^2}{\pi}=\frac{1}{4}$ and $\xi \in (0,1)$, the inequalities (\ref{rimsky}) and (\ref{xerazade0}) yield

\begin{eqnarray}
 \int u_n^2 \log u_n^2 dx  &\leq&\frac{1}{4}|\nabla u_n|_{2}^2+ C\big(  \log {|u_n|^2_{2}+1} \big)
{|u_n|^2_{2}}\nonumber\\
&\leq&\frac{1}{4}|\nabla u_n|^2_{2}+C_1\big(1+\|u_n\|\big)^{1+\xi} \label{isso}.
\end{eqnarray}
Since
$$
d+o_n(1)=J_{\epsilon}(u_n)=\frac{1}{2}|\nabla u_n|^2_{2}+  \int (V(\epsilon x)+1)|u_n|^2dx-\frac{1}{2}\int u_n^2 \log u_n^2dx
$$
assertion (\ref{isso}) assures that

\begin{equation}\label{porta}
d+o_n(1) \geq C \big[\| u_n\|^2-\big(1+\|u_n\|\big)^{1+\xi} \big],
\end{equation}
showing that the sequence $(u_n)$ is bounded in $H^{1}(\mathbb{R}^N)$.
\end{proof}
\begin{lemma}\label{le22}
Under the assumptions $(V1)-(V4)$, for each $\sigma>0$, there is $\epsilon_{0}=\epsilon_{0}(\sigma)>0$, such that
$J_{\epsilon}$ satisfies the $(PS)_{c}$ condition for all $c\in (m(c_{0})+\sigma/2, 2m(c_{0})-\sigma)$, for all $\epsilon\in (0, \epsilon_{0})$.
\end{lemma}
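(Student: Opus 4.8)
The plan is to run a concentration--compactness analysis on an arbitrary $(PS)_c$ sequence and to show that, for $c$ in the prescribed window and $\epsilon$ small, no mass can leak to infinity. First I would take $(u_n)$ with $J_\epsilon(u_n)\to c$ satisfying the Szulkin inequality (\ref{psequence}); by Lemma \ref{boundedness} the sequence is bounded, so along a subsequence $u_n\rightharpoonup u$ in $H^1(\mathbb{R}^N)$, $u_n\to u$ in $L^p_{loc}$ for each $p\in[2,2^*)$, and $u_n\to u$ a.e. Using Lemma \ref{lema}(iv), the lower semicontinuity of the convex part $\Psi$, and the subcritical growth (\ref{eq5}) of $F_2'$, I would pass to the limit in the variational inequality to conclude that $u\in D(J_\epsilon)$ is a critical point of $J_\epsilon$, hence a solution of (\ref{1'}) by Lemma \ref{lema}(iii). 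In particular, Corollary \ref{C1} gives $J_\epsilon(u)=\tfrac12|u|_2^2\ge 0$.

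Next I would set $v_n=u_n-u$ and establish a Brezis--Lieb type splitting for the norm, the nonlinear term, and the derivative functional, so that $(v_n)$ is itself a Palais--Smale sequence at level $c-J_\epsilon(u)$. If $v_n\to 0$ strongly we are done, so suppose not. A Lions-type nonvanishing argument then produces $y_n$ with $|y_n|\to\infty$ such that, after translation, $v_n(\cdot+y_n)\rightharpoonup w\ne 0$. Passing to a further subsequence with $V(\epsilon y_n)\to A$ (legitimate because $(V2)$ makes $V(\epsilon\,\cdot+\epsilon y_n)$ converge locally uniformly to the constant $A$), the limit $w$ solves the autonomous problem (\ref{constant}) with this $A$, so $J_A(w)\ge m(A)\ge m(c_0)$, the last inequality using $c_0=\inf_{\mathbb{R}^N}V$ together with the monotonicity of $A\mapsto m(A)$. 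Iterating, and using that each bubble costs at least $m(c_0)>0$, only finitely many bubbles arise and the energy decomposes as $c=J_\epsilon(u)+\sum_{j=1}^{k}m(A_j)$ with every $A_j\ge c_0$.

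The conclusion is then a bookkeeping argument driven by the two endpoints of the interval. Vanishing ($c=0$) is excluded because $c>m(c_0)+\sigma/2>0$; two or more bubbles are excluded because they force $c\ge 2m(c_0)>2m(c_0)-\sigma$; and if $u\ne 0$, I would show its energy is too large to coexist with any bubble, this being where the inequality $m(V(0))\ge 2m(c_0)$ of $(V4)$ enters, since in the semiclassical regime a nontrivial weak limit concentrates near the saddle point $x=0$ and carries energy comparable to $m(V(0))$. This leaves exactly one surviving possibility: $u=0$ together with a single escaping bubble, so that $c=m(A_1)$ with $A_1\ge c_0$.

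Removing that single escaping bubble is the main obstacle, and it is where the saddle geometry $(V1)$ and the quantitative part of $(V4)$ are indispensable. Here I would analyze the escape direction of $y_n$, decomposing $\epsilon y_n$ along $X$ and along the cone $Y_\lambda$: if the drift is essentially along $X$, then $c_0\le A_1=\lim V(\epsilon y_n)\le \limsup_R\sup_{\partial B_R(0)\cap X}V=c_0$ forces $A_1=c_0$ and hence $c=m(c_0)$, which is excluded by the lower cutoff $c>m(c_0)+\sigma/2$; if instead the drift enters $Y_\lambda$, then $(V1)$ gives $A_1\ge\inf_{Y_\lambda}V>c_0$, and the estimates on $A\mapsto m(A)$ from earlier in this section, combined with $c_1\le c_0+\tfrac{3}{10}c_2$, must be invoked to push $m(A_1)$ out of the window and force a contradiction with the upper cutoff $2m(c_0)-\sigma$. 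I expect this quantitative, geometry-dependent exclusion, rather than the (by now standard) splitting machinery, to be the delicate heart of the proof, with the choice $c_2=\min\{1,c_0\}$ and the constant $3/10$ encoding an explicit logarithmic control on the growth of $m$. Once the single bubble is ruled out, $v_n\to 0$ strongly, i.e.\ $u_n\to u$ in $H^1(\mathbb{R}^N)$, completing the verification of $(PS)_c$.
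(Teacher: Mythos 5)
Your overall skeleton (boundedness, weak limit, exclusion of vanishing and of two bubbles by the energy window, reduction to a single escaping bubble) matches the paper's strategy in spirit, but the two steps you yourself flag as delicate are exactly where the argument breaks down, and the mechanism you propose for the final exclusion is not the one that works. First, your profile decomposition presumes that $V(\epsilon\,\cdot+\epsilon y_n)$ converges locally uniformly to a constant $A$; by $(V2)$ this holds only up to an error of order $\epsilon|x|$, so for a \emph{fixed} $\epsilon$ the translated limit problem is not autonomous and the bubbles need not carry energies $m(A_j)$. The paper avoids this by arguing by contradiction along a sequence $\epsilon_n\to 0$ and extracting a diagonal subsequence $u_{m_n}^{n}$, so that the rescaled potential genuinely flattens. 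The same issue affects your treatment of a nontrivial weak limit $u\neq 0$: only in the limit $\epsilon_n\to 0$ does $u$ solve the autonomous problem with constant coefficient $V(0)$, which is what lets $m(V(0))\geq 2m(c_0)$ from $(V4)$ clash, via Fatou's lemma, with $c\leq 2m(c_0)-\sigma$.

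Second, and more seriously, the single escaping bubble cannot be removed by ``pushing $m(A_1)$ out of the window.'' Nothing in $(V1)$ forces $A_1=\lim V(\epsilon_n z_n)$ to equal $c_0$ when the drift stays near $X$: the condition $c_0=\inf_{R}\sup_{\partial B_R(0)\cap X}V$ does not control $V$ along an arbitrary escaping sequence in $X$, and $\sup_X V=c_1$ may exceed $c_0$. Hence $A_1$ can be any value in $[c_0,c_1]$, and since $m$ is continuous and increasing in $A$, $m(A_1)$ can land squarely inside $(m(c_0)+\sigma/2,\,2m(c_0)-\sigma)$, so no contradiction with the energy window is available. The hypothesis that actually kills the escaping bubble is $(V3)$, which you never invoke. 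The paper shows, by testing the equation satisfied by $\omega_n=u_n(\cdot+z_n)$ against $\partial\phi_k/\partial x_i$ for smooth approximations $\phi_k$ of the limit profile $\omega$ and integrating by parts, that $\frac{\partial V}{\partial x_i}(\epsilon_n z_n)\to 0$ for every $i$; together with the convergence of $V(\epsilon_n z_n)$ this makes $(\epsilon_n z_n)$ a Palais--Smale sequence for $V$ with $|\epsilon_n z_n|\to+\infty$, contradicting the compactness assumption $(V3)$. (The inequality $c_1\leq c_0+\frac{3}{10}c_2$ plays no role in this lemma; it is used later to bound $\sup_{x\in X}J_\epsilon(\Phi_\epsilon(x))$ strictly below $2m(c_0)$.) Without the $\nabla V$ estimate and the appeal to $(V3)$, your proof has a genuine gap at its central step.
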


\begin{proof}
We shall prove the lemma arguing by contradiction, by supposing that there are $\sigma>0$ and $\epsilon_{n}\rightarrow 0$,
such that $J_{\epsilon_{n}}$ does not satisfy the $(PS)$ condition. Therefore, there is $c_{n}\in (m(c_{0})+\sigma/2, 2m(c_{0})-\sigma)$ such that $J_{\epsilon_{n}}$ does not satisfy the $(PS)_{c_{n}}$ condition.
Then, there exists a sequence $(u_{m}^{n})$ such that
 \begin{equation}\label{eq31}
\lim_{m\to +\infty}J_{\epsilon_{n}}(u_{m}^{n})=c_{n}\quad\text{and}\quad \lim_{m\to +\infty}\|J'_{\epsilon_{n}}(u_{m}^{n})\|=0.
\end{equation}
By Lemma \ref{boundedness}, the sequence $(u_{m}^{n})$ is bounded in $H^{1}(\mathbb{R}^N)$, then
 \begin{equation}\label{eq32}
u_{m}^{n}\rightharpoonup u_{n}\quad \text{in}\,\,H^{1}(\mathbb{R}^N)\quad\text{but}\quad u_{m}^{n}\not\rightarrow u_{n}\quad \text{in}\,\,H^{1}(\mathbb{R}^N).
\end{equation}
Now, we claim that there is $\delta>0$, such that
\begin{equation*}
\liminf_{m\to +\infty}\sup_{y\in \mathbb{R}^N}\int_{B_R(y)}\vert u_{m}^{n}\vert^{2}dx\geq \delta,\quad \forall n\in \mathbb{N}.
\end{equation*}
Indeed, on the contrary, there is $(n_{j})\subset \mathbb{N}$ satisfying
\begin{equation*}
\liminf_{m\to +\infty}\sup_{y\in \mathbb{R}^N}\int_{B_R(y)}\vert u_{m}^{n_{j}}\vert^{2}dx\leq \frac{1}{j},\quad \forall j \in \mathbb{N}.
\end{equation*}
Arguing as in Lions \cite{lions},
\begin{equation*}
\limsup_{m\to +\infty}\vert u_{m}^{n_{j}}\vert_{p}=o_{j}(1),\quad \forall p\in(2, 2^{*}).
\end{equation*}
Then, by (\ref{eq5}), we would get
\begin{equation*}
\limsup_{m\to +\infty}\int F'_{2}(u_{m}^{n_{j}})u_{m}^{n_{j}}dx=o_{j}(1).
\end{equation*}
On the other hand, by (\ref{eq31}), we obtain
\begin{equation*}
\Vert u_{m}^{n_{j}}\Vert_{\epsilon}^{2}+\int F'_{1}(u_{m}^{n_{j}})u_{m}^{n_{j}}dx=J'_{\epsilon_{n_{j}}}(u_{m}^{n_{j}})u_{m}^{n_{j}}+\int F'_{2}(u_{m}^{n_{j}})u_{m}^{n_{j}}dx=o_{m}(1)\Vert u_{m}^{n_{j}}\Vert_{\epsilon}+\int F'_{2}(u_{m}^{n_{j}})u_{m}^{n_{j}}dx,
\end{equation*}
and
\begin{equation*}
\limsup_{m\to +\infty} \Vert u_{m}^{n_{j}}\Vert_{\epsilon}^{2}=o_{j}(1).
\end{equation*}
It is easy to see that $u_{m}^{n_{j}}\not\rightarrow 0$ as $m\rightarrow\infty$ in $H^{1}(\mathbb{R}^N)$, otherwise we  get
$$
\int F'_{1}(u_{m}^{n_{j}})u_{m}^{n_{j}}dx \to 0,
$$
which together with convexity of $F_1$ yields
$$
\int F_{1}(u_{m}^{n_{j}})dx \to 0.
$$
This limit combined with $u_{m}^{n_{j}} \rightarrow 0$ implies that $J_{\epsilon_{n_{j}}}(u_{m}^{n_{j}})\rightarrow 0$, which contradicts with $c_{n}>0$.  Thus
\begin{equation*}
\liminf_{m\to +\infty} \Vert u_{m}^{n_{j}}\Vert_{\epsilon}^{2}>0.
\end{equation*}
Without loss of generality, we can assume that $\{u_{m}^{n_{j}}\}\subset H^{1}(\mathbb{R}^N)\setminus \{0\}$. Thereby, there exists
$t_{m}^{n_{j}}\in (0, +\infty)$ such that
\begin{equation*}
t_{m}^{n_{j}}u_{m}^{n_{j}}\in \mathcal{N}_{\epsilon_{n_{j}}}.
\end{equation*}
Since
\begin{equation}\label{eq21}
J_{\epsilon_{n_{j}}}(u_{m}^{n_{j}})=\frac{(2\log t_{m}^{n_{j}}+1)}{2}\int \vert u_{m}^{n_{j}}\vert^{2}dx
\end{equation}
and
\begin{equation}\label{eq210}
J_{\epsilon_{n_{j}}}(u_{m}^{n_{j}})=\frac{1}{2}\int \vert u_{m}^{n_{j}}\vert^{2}dx+o_m(1),
\end{equation}
it is easy to see that
\begin{equation*}
\lim_{m\to +\infty}t_{m}^{n_{j}}=1.
\end{equation*}
Recalling that
 \begin{equation*}
 J_{\epsilon_{n_{j}}}(t_{m}^{n_{j}}u_{m}^{n_{j}})=\frac{1}{2}\int \vert t_{m}^{n_{j}} u_{m}^{n_{j}}\vert^{2}dx,
\end{equation*}
we have that
 \begin{equation*}
\lim_{m\to +\infty}J_{\epsilon_{n_{j}}}(t_{m}^{n_{j}}u_{m}^{n_{j}})=\lim_{m\to +\infty}J_{\epsilon_{n_{j}}}(u_{m}^{n_{j}}).
\end{equation*}
From the above limits, there is $r_{m}^{n_{j}}\in (0, 1)$ such that
\begin{equation*}
r_{m}^{n_{j}}(t_{m}^{n_{j}}u_{m}^{n_{j}})\in \mathcal{N}_{c_{0}}.
\end{equation*}
Hence,
\begin{eqnarray}\label{eq33}
m(c_{0})&\leq& \limsup_{m\to +\infty}J_{c_{0}}(r_{m}^{n_{j}}(t_{m}^{n_{j}}u_{m}^{n_{j}}))\leq \limsup_{m\to +\infty}J_{\epsilon_{n_{j}}}(t_{m}^{n_{j}}u_{m}^{n_{j}})=\limsup_{m\to +\infty}J_{\epsilon_{n_{j}}}(u_{m}^{n_{j}})\nonumber\\
&=&\frac{1}{2}\limsup_{m\to +\infty}\int\vert u_{m}^{n_{j}}\vert^{2}dx\leq C \limsup_{m\to +\infty}\Vert u_{m}^{n_{j}}\Vert^{2},
\end{eqnarray}
that is
\begin{equation*}
0<m(c_{0})\leq o_{j}(1),
\end{equation*}
which is a contradiction.

From the above study, for each $m\in \mathbb{N}$, there is $m_{n}\in \mathbb{N}$ such that
\begin{equation*}
\int_{B_R(z_{m_{n}}^{n})}|u_{m_{n}}^{n}|^{2}\,dx\geq \frac{\delta}{2},\quad |\epsilon_{n} z_{m_{n}}^{n}|\geq n, \quad \Vert J'_{\epsilon_{n}}(u_{m_{n}}^{n})\Vert\leq \frac{1}{n}\,\, \text{and}\,\, \vert J_{\epsilon_{n}}(u_{m_{n}}^{n})-c_{n}\vert\leq \frac{1}{n}.
\end{equation*}
In what follows, we denote by $(z_{n})$ and $(u_{n})$ the sequences $(z_{m_{n}}^{n})$ and $(u_{m_{n}}^{n})$ respectively. Thus,
\begin{equation*}
\int_{B_R(z_{n})}|u_{n}|^{2}\,dx\geq \frac{\delta}{2},\quad |\epsilon_{n} z_{n}|\geq n, \quad \Vert J'_{\epsilon_{n}}(u_{n})\Vert\leq \frac{1}{n}\,\, \text{and}\,\, \vert J_{\epsilon_{n}}(u_{n})-c_{n}\vert\leq \frac{1}{n}.
\end{equation*}

Arguing as in the proof of Lemma \ref{boundedness}, the sequence $(u_{n})$ is a bounded in $H^{1}(\mathbb{R}^N)$.  Then, for some subsequence, there exists $u\in H^{1}(\mathbb{R}^N)$ such that
\begin{equation*}
u_{n}\rightharpoonup u\quad\text{in}\,\, H^{1}(\mathbb{R}^N).
\end{equation*}
We claim that $u=0$, because if $u\neq 0$, the limit $\Vert J'_{\epsilon_{n}}(u_{n})\Vert\rightarrow 0$ together with $(V2)$ yield
$u$ is a nontrivial solution of the problem
\begin{equation*}
-\Delta u+ V(0)u=u \log u^2,  \quad \mbox{in} \, \,\mathbb{R}^{N}.
\end{equation*}
Then, combining the definition of $m(V(0))$ with $(V4)$, we get
\begin{equation*}
J_{V(0)}(u)\geq m(V(0))\geq 2m(c_{0}).
\end{equation*}
On the other hand, using the Fatou's lemma, one has
$$
J_{V(0)}(u)\leq \frac{1}{2}\liminf_{n\to +\infty}\int|u_{n}|^2dx=\liminf_{n\to +\infty}(J_{\epsilon_{n}}(u_{n})-\frac{1}{2}J'_{\epsilon_{n}}(u_{n})u_{n})=
\liminf_{n\to +\infty}J_{\epsilon_{n}}(u_{n}),
$$
and so,
$$
J_{V(0)}(u)\leq \liminf_{n\to +\infty}c_{n}\leq 2m(c_{0})-\sigma,
$$
which is a contradiction, from where it follows that $u_{n}\rightharpoonup 0$ in $H^{1}(\mathbb{R}^N)$.

Considering $\omega_{n}=u_{n}(\cdot+z_{n})$, we have that $(\omega_{n})$ is bounded in $H^{1}(\mathbb{R}^N)$. Therefore, there is $\omega\in H^{1}(\mathbb{R}^N)$ such that
\begin{equation*}
\omega_{n}\rightharpoonup \omega\quad\text{in}\,\, H^{1}(\mathbb{R}^N)
\end{equation*}
and
\begin{equation*}
\int_{B_R(0)}|\omega|^{2}\,dx=\liminf_{n\to +\infty}\int_{B_R(0)}|\omega_{n}|^{2}\,dx=\liminf_{n\to +\infty}\int_{B_R(z_{n})}|u_{n}|^{2}\,dx\geq \frac{\delta}{2},
\end{equation*}
showing that $\omega\neq 0$.

Now, for each $\phi\in C_{0}^{\infty}(\mathbb{R}^N)$, we have
$$
\int \nabla \omega_{n} \nabla \phi dx +\int V(\epsilon_{n}z_{n}+\epsilon_{n}x)\omega_{n} \phi dx -\int \omega_{n}\phi\log \omega_{n}^{2}dx=o_{n}(1)\Vert \phi\Vert,
$$
which implies that $\omega$ is a nontrivial solution of the problem
\begin{equation}\label{eq34}
-\Delta u+ \alpha_{1}u=u \log u^2  \quad \mbox{in} \quad \mathbb{R}^{N},
\end{equation}
where $\alpha_{1}=\displaystyle \lim_{n\to +\infty}V(\epsilon_{n}z_{n})$. By \cite{squassina}, $\omega\in C^{2}(\mathbb{R}^N)\cap H^{1}(\mathbb{R}^N)$.

For each $k\in N$, there is $\phi_{k}\in C_{0}^{\infty}(\mathbb{R}^N)$ such that
$$
\Vert \phi_{k}-\omega\Vert\rightarrow 0\quad \text{as}\,\, k\rightarrow+\infty,
$$
that is,
$$
\Vert \phi_{k}-\omega\Vert=o_{k}(1).
$$
Using $\frac{\partial \phi_{k}}{\partial x_{i}}$ as a test function, we obtain
$$
\int \nabla \omega_{n} \nabla \frac{\partial \phi_{k}}{\partial x_{i}} dx +\int V(\epsilon_{n}z_{n}+\epsilon_{n}x)\omega_{n} \frac{\partial \phi_{k}}{\partial x_{i}} dx -\int \omega_{n}\frac{\partial \phi_{k}}{\partial x_{i}}\log \omega_{n}^{2}dx=o_{n}(1).
$$
Now, using well known arguments,
$$
\int \nabla \omega_{n} \nabla \frac{\partial \phi_{k}}{\partial x_{i}} dx=\int \nabla \omega \nabla \frac{\partial \phi_{k}}{\partial x_{i}} dx+o_{n}(1)
$$
and
$$
\int \omega_{n}\frac{\partial \phi_{k}}{\partial x_{i}}\log \omega_{n}^{2}dx=\int \omega\frac{\partial \phi_{k}}{\partial x_{i}}\log \omega^{2}dx+o_{n}(1).
$$
Combining the above limit with (\ref{eq34}), we derive that
$$
\limsup_{n\to +\infty}\Big|\int (V(\epsilon_{n}z_{n}+\epsilon_{n}x)-V(\epsilon_{n}z_{n}))\omega_{n} \frac{\partial \phi_{k}}{\partial x_{i}} dx \Big|=0.
$$
As $\phi_{k}$ has compact support, the above limit gives
$$
\limsup_{n\to +\infty}\Big|\int (V(\epsilon_{n}z_{n}+\epsilon_{n}x)-V(\epsilon_{n}z_{n}))\omega \frac{\partial \phi_{k}}{\partial x_{i}} dx \Big|=0.
$$
Now, recalling that $\frac{\partial \omega}{\partial x_{i}}\in L^{2}(\mathbb{R}^N)$, we have that $(\frac{\partial  \phi_{k}}{\partial x_{i}})$
is bounded in $L^{2}(\mathbb{R}^N)$. Hence,
$$
\limsup_{n\to +\infty}\Big|\int (V(\epsilon_{n}z_{n}+\epsilon_{n}x)-V(\epsilon_{n}z_{n}))\phi_{k} \frac{\partial \phi_{k}}{\partial x_{i}} dx \Big|=o_{k}(1),
$$
and so,
$$
\limsup_{n\to +\infty}\Big|\frac{1}{2}\int (V(\epsilon_{n}z_{n}+\epsilon_{n}x)-V(\epsilon_{n}z_{n}))\frac{\partial (\phi_{k}^{2})}{\partial x_{i}} dx \Big|=o_{k}(1).
$$
Using Green's Theorem together with the fact that $\phi_{k}$ has compact support, we have the following limit
$$
\limsup_{n\to +\infty}\Big|\int \frac{\partial V}{\partial x_{i}}(\epsilon_{n}z_{n}+\epsilon_{n}x)\phi_{k}^{2} dx \Big|=o_{k}(1),
$$
which combined with $(V2)$ yields
$$
\limsup_{n\to +\infty}\Big|\frac{\partial V}{\partial x_{i}}(\epsilon_{n}z_{n})\int \vert \phi_{k}\vert^{2} dx \Big|=o_{k}(1).
$$
As
$$
\int \vert \phi_{k}\vert^{2} dx\rightarrow \int \vert \omega\vert^{2} dx\quad \text{as}\,\, k\rightarrow+\infty,
$$
it follows that
$$
\limsup_{n\to +\infty}\Big|\frac{\partial V}{\partial x_{i}}(\epsilon_{n}z_{n}) \Big|=o_{k}(1), \quad \forall \{1, \cdots, N\}.
$$
Since $k$ is arbitrary, we obtain that
$$
\nabla V(\epsilon_{n}z_{n})\rightarrow 0\quad \text{as}\,\, n\rightarrow \infty.
$$
Therefore, $(\epsilon_{n}z_{n})$ is a $(PS)_{\alpha_{1}}$ sequence for $V$, which is a contradiction, because by assumption $V$
satisfies the $(PS)$ condition and $(\epsilon_{n}z_{n})$ does not have any convergent subsequence in $\mathbb{R}^N$.

\end{proof}

The next  lemma will be crucial in our study to show an important estimate, see Lemma \ref{vicente} in Section 3.

\begin{lemma}\label{vice}
Let $\epsilon_{n}\rightarrow 0$ and $(u_{n})\subset \mathcal{N}_{\epsilon_{n}}$ such that $J_{\epsilon_{n}}(u_{n}) \to m(c_{0})$.
Then, there are $(z_{n})\subset \mathbb{R}^N$ with $\vert z_{n}\vert \rightarrow +\infty$ and $u_{1}\in H^{1}(\mathbb{R}^N)\setminus \{0\}$
such that
$$
u_{n}(\cdot+z_{n})\rightarrow u_{1}\quad \text{in}\,\, H^{1}(\mathbb{R}^N).
$$
Moreover, $\displaystyle \liminf_{n\to +\infty}\vert \epsilon_{n}z_{n}\vert>0$.
\end{lemma}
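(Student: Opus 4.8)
The plan is to run a concentration--compactness argument tailored to the non-smooth functional $J_{\epsilon_n}$, extracting subsequences throughout. Since $u_n\in\mathcal{N}_{\epsilon_n}$, Corollary \ref{C1} gives $J_{\epsilon_n}(u_n)=\frac{1}{2}|u_n|_2^2$, so the hypothesis $J_{\epsilon_n}(u_n)\to m(c_0)$ pins the mass, $|u_n|_2^2\to 2m(c_0)>0$; this bounds $(u_n)$ in $L^2(\mathbb{R}^N)$, and the logarithmic Sobolev estimate used in Lemma \ref{boundedness} then yields boundedness in $H^1(\mathbb{R}^N)$. I would first exclude vanishing: if $\sup_{y}\int_{B_R(y)}|u_n|^2\,dx\to 0$, then $u_n\to 0$ in $L^p(\mathbb{R}^N)$ for $p\in(2,2^*)$ by Lions' lemma, so $\int F_2(u_n)\,dx\to 0$ by the subcritical bound (\ref{eq5}); inserting this and $F_1\geq 0$ into the Nehari identity contradicts $|u_n|_2^2\to 2m(c_0)>0$, exactly as in the vanishing analysis of Lemma \ref{le22}. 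Hence there exist $(z_n)\subset\mathbb{R}^N$ and $\delta>0$ with $\int_{B_R(z_n)}|u_n|^2\,dx\geq\delta/2$, and with $\omega_n:=u_n(\cdot+z_n)$ we obtain $\omega_n\rightharpoonup u_1$ in $H^1(\mathbb{R}^N)$, where $u_1\neq 0$ because the local $L^2$ mass persists under the compact embedding on $B_R(0)$.

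Passing to a subsequence with $V(\epsilon_n z_n)\to\alpha_1$, necessarily $\alpha_1\geq c_0$. Translating the equation for $u_n$ and letting $n\to\infty$, exactly as in Lemma \ref{le22} (here $(V2)$ and $\epsilon_n\to 0$ give $V(\epsilon_n z_n+\epsilon_n x)\to\alpha_1$ locally uniformly), shows that $u_1$ solves $-\Delta u+\alpha_1 u=u\log u^2$, i.e. $u_1\in\mathcal{N}_{\alpha_1}$. The heart of the argument is then a squeeze by energy. From the characterization (\ref{sisi}) the map $A\mapsto m(A)$ is nondecreasing, since $J_A(tu)$ increases with $A$; together with $u_1\in\mathcal{N}_{\alpha_1}$ and Fatou's lemma this gives
\[
m(c_0)\leq m(\alpha_1)\leq J_{\alpha_1}(u_1)=\tfrac{1}{2}|u_1|_2^2\leq \tfrac{1}{2}\liminf_{n}|\omega_n|_2^2=\lim_{n}\tfrac{1}{2}|u_n|_2^2=m(c_0).
\]
Thus every inequality is an equality: $u_1$ is a ground state, $m(\alpha_1)=m(c_0)$, and $|u_1|_2^2=\lim_n|\omega_n|_2^2$, so by the Radon--Riesz property $\omega_n\to u_1$ in $L^2(\mathbb{R}^N)$ (and a.e. along a subsequence).

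To place the concentration away from the origin, suppose $\liminf_n|\epsilon_n z_n|=0$. Then $\epsilon_n z_n\to 0$ along a subsequence, forcing $\alpha_1=V(0)$, and the equalities above give $J_{V(0)}(u_1)=m(c_0)$; but $(V4)$ yields $J_{V(0)}(u_1)\geq m(V(0))\geq 2m(c_0)>m(c_0)$, a contradiction. Hence $\liminf_n|\epsilon_n z_n|>0$, and since $\epsilon_n\to 0$ this forces $|z_n|\to+\infty$, which is the asserted escape of the translations.

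It remains to upgrade the convergence to $H^1(\mathbb{R}^N)$. As $H^1$ is a Hilbert space and $\omega_n\rightharpoonup u_1$, it suffices to show norm convergence, and since $|\omega_n|_2\to|u_1|_2$ this reduces to $\limsup_n|\nabla\omega_n|_2^2\leq|\nabla u_1|_2^2$. The Nehari identity for $\omega_n$ reads $|\nabla\omega_n|_2^2=\int\omega_n^2\log\omega_n^2\,dx-\int V(\epsilon_n z_n+\epsilon_n x)|\omega_n|^2\,dx$, and the last integral tends to $\alpha_1|u_1|_2^2$ by the $L^2$ convergence, the boundedness of $V$, and $(V2)$ (which gives $V(\epsilon_n z_n+\epsilon_n x)\to\alpha_1$ locally uniformly). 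Writing $\tfrac{1}{2}s^2\log s^2=F_2(s)-F_1(s)$ as in (\ref{efes}) and interpolating the strong $L^2$ convergence with the $H^1$ bound, one gets $\omega_n\to u_1$ in $L^p(\mathbb{R}^N)$ for $p\in(2,2^*)$, whence $\int F_2(\omega_n)\,dx\to\int F_2(u_1)\,dx$ by (\ref{eq5}), while $F_1\geq 0$ and Fatou give $\liminf_n\int F_1(\omega_n)\,dx\geq\int F_1(u_1)\,dx$. Combining these yields $\limsup_n\int\omega_n^2\log\omega_n^2\,dx\leq\int u_1^2\log u_1^2\,dx$, and comparison with the Nehari identity for $u_1$ produces the required gradient bound, hence $\omega_n\to u_1$ in $H^1(\mathbb{R}^N)$. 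I expect this final step to be the main obstacle: the logarithmic term is not weakly continuous and $J_\epsilon$ fails to be $C^1$, so controlling $\int\omega_n^2\log\omega_n^2\,dx$ really forces one to split it through $F_1$ and $F_2$ and to lean on the Nehari constraint, rather than passing to the limit directly.
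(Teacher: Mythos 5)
There is a genuine gap at the step where you claim that ``translating the equation for $u_n$ and letting $n\to\infty$, exactly as in Lemma \ref{le22}, shows that $u_1$ solves $-\Delta u+\alpha_1 u=u\log u^2$.'' In Lemma \ref{le22} that passage is legitimate because the sequence there is a Palais--Smale sequence, so $\langle J'_{\epsilon_n}(u_n),\phi\rangle=o_n(1)\|\phi\|$ for every test function $\phi$ and one can pass to the limit in the weak formulation. In the present lemma the hypothesis is only that $u_n\in\mathcal{N}_{\epsilon_n}$ with $J_{\epsilon_n}(u_n)\to m(c_0)$: the only identity available is the scalar Nehari constraint $J'_{\epsilon_n}(u_n)u_n=0$, and there is no equation satisfied by $u_n$ to translate. (Replacing $(u_n)$ by a nearby PS sequence via Ekeland is not routine here either, since, as the paper stresses, the Nehari set is not a manifold for this functional.) This single gap propagates through the rest of your argument: without $u_1\in\mathcal{N}_{\alpha_1}$ you cannot write $J_{\alpha_1}(u_1)=\tfrac12|u_1|_2^2$, so the energy squeeze $m(c_0)\le m(\alpha_1)\le J_{\alpha_1}(u_1)\le m(c_0)$ collapses, and the final Radon--Riesz/Nehari upgrade to strong $H^1$ convergence also relies on the Nehari identity for $u_1$. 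Trying to recover $J'_{\alpha_1}(u_1)u_1\le 0$ directly from the constraint for $\omega_n$ is circular: weak lower semicontinuity and Fatou control the left-hand side, but passing to the limit in $\int F_2'(\omega_n)\omega_n\,dx$ requires global strong $L^p$ convergence, which is exactly what you are trying to prove.

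The paper closes this gap by a different mechanism that you should compare with. Since $V\ge c_0$, one has $J_{c_0}\le J_{\epsilon_n}$ and $J'_{c_0}(u_n)u_n\le 0$, so there exist $t_n\in(0,1]$ with $t_nu_n\in\mathcal{N}_{c_0}$ and $J_{c_0}(t_nu_n)\to m(c_0)$; thus $(t_nu_n)$ is a minimizing sequence for the \emph{autonomous} ground-state problem, and the compactness result of \cite[Section 6]{AlvesdeMorais} yields translations $(z_n)$ and strong $H^1$ convergence $u_n(\cdot+z_n)\to u_1\neq0$ at the outset. With strong convergence in hand, passing to the limit in the Nehari identity (\ref{eq36}) gives the inequality $\int(|\nabla u_1|^2+V(0)|u_1|^2)\,dx+\int F_1'(u_1)u_1\,dx\le\int F_2'(u_1)u_1\,dx$ when $\epsilon_nz_n\to0$, which produces $t_1\in(0,1]$ with $t_1u_1\in\mathcal{N}_{V(0)}$ and the contradiction with $(V4)$ exactly as in your last paragraph. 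Your vanishing/non-vanishing analysis, the monotonicity of $A\mapsto m(A)$, and the deduction $|z_n|\to\infty$ from $\liminf|\epsilon_nz_n|>0$ are all fine; the missing ingredient is the reduction to the autonomous problem (or some substitute) that delivers strong convergence and membership of $u_1$ in a Nehari set.
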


\begin{proof}
Since $u_{n}\in \mathcal{N}_{\epsilon_{n}}$, we have that $J'_{c_{0}}(u_{n})u_{n}\leq 0$ and $J_{c_{0}}(u)\leq J_{\epsilon_{n}}(u)$ for all
$u\in H^{1}(\mathbb{R}^N)$ and $n\in \mathbb{N}$. From this, there is $t_{n}\in (0, 1]$ such that
\begin{equation*}
(t_{n}u_{n})\subset \mathcal{N}_{c_{0}}\quad \text{and}\quad J_{c_{0}}(t_{n}u_{n})\rightarrow m(c_{0}).
\end{equation*}
Since $(t_{n})$ is bounded, by \cite[Section 6]{AlvesdeMorais}, there are $(z_{n})\subset \mathbb{R}^N$, $u_{1}\in  H^{1}(\mathbb{R}^N)\setminus \{0\}$, and a subsequence of $(u_{n})$,
still denote by $(u_{n})$, verifying
$$
u_{n}(\cdot+z_{n})\rightarrow u_{1}\quad \text{in}\,\, H^{1}(\mathbb{R}^N).
$$
Now, we claim that $\displaystyle \liminf_{n\to +\infty}\vert \epsilon_{n}z_{n}\vert>0$. Indeed, as $u_{n}\in \mathcal{N}_{\epsilon_{n}}$ for all $n\in N$,
the function $u_{n}^{1}=u_{n}(\cdot+z_{n})$ must verify
\begin{equation}\label{eq36}
\int(|\nabla u_{n}^{1}|^2+V(\epsilon_{n}z_{n}+\epsilon_{n}x)|u_{n}^{1}|^2)dx+ \int F'_{1}(u_{n}^{1})u_{n}^{1}dx=\int F'_{2}(u_{n}^{1})u_{n}^{1}dx.
\end{equation}
Since $F_{1}$ is convex, even and $F_{1}(t)\geq F_{1}(0)=0$ for all $t\in \mathbb{R}$, we derive that $0\leq F_{1}(t)\leq F'_{1}(t)t$ for all $t\in \mathbb{R}$. Supposing by contradiction that for some subsequence
$$
\lim_{n\to +\infty} \epsilon_{n}z_{n}=0,
$$
taking the limit of $n\to +\infty$ in (\ref{eq36}), we derive the following inequality
\begin{equation*}
\int(|\nabla u_{1}|^2+V(0)|u_{1}|^2)dx+ \int F'_{1}(u_{1})u_{1}dx\leq\int F'_{2}(u_{1})u_{1}dx.
\end{equation*}
Thus, there exists $t_{1}\in (0, 1]$ such that $t_{1}u_{1}\in  \mathcal{N}_{V(0)}$.
Therefore, we have
\begin{equation}\label{eq37}
J_{V(0)}(t_{1}u_{1})\geq m(V(0))>m(c_{0})>0.
\end{equation}
On the other hand,  one has
\begin{eqnarray*}
\lim_{n\to +\infty}J_{\epsilon_{n}}(u_{n})=\lim_{n\to +\infty}\frac{1}{2}\int |u_{n}^{1}|^2 dx&=&\frac{1}{2}\int |u_{1}|^2 dx\\
&\geq &\frac{1}{2}\int |t_{1} u_{1}|^2 dx\\
&=&J_{V(0)}(t_{1}u_{1}),
\end{eqnarray*}
that is,
\begin{equation*}\label{eq38}
m(c_{0})\geq J_{V(0)}(t_{1}u_{1}),
\end{equation*}
which contradicts (\ref{eq37}), finishing the proof.
\end{proof}

\section{A special minimax level}

In order to prove Theorem 1.1, we shall consider a special minimax level. At first, we fix the barycenter function by
$$
\beta(u)=\frac{\int \frac{x}{|x|} |u|^{2}\,dx}{\int |u|^{2}\,dx}, \quad \forall\, u\in H^{1}(\mathbb{R}^N)\setminus \{0\}.
$$
In what follows, $u_{0}$ denotes a positive ground state solution for $J_{c_{0}}$, that is,
$$
J_{c_{0}}(u_{0})=m(c_{0})\quad \text{and}\quad J'_{c_{0}}(u_{0})=0.
$$
Moreover, by \cite{squassina}, $u_{0}$ is also radial. For each $z\in \mathbb{R}^N$ and $\epsilon>0$, we set the function
$$
\phi_{\epsilon, z}(x)=t_{\epsilon, z}u_{0}\Big(x-\frac{z}{\epsilon}\Big),
$$
where $t_{\epsilon, z}>0$ is such that $\phi_{\epsilon, z}\in \mathcal{N}_{\epsilon}$. In what follows, we set $\Phi_{\epsilon}(z)=\phi_{\epsilon, z}$ for all $z \in \mathbb{R}^N$.

\begin{lemma} \label{PhiEP}  The function $\Phi_{\epsilon}:\mathbb{R}^N \to \mathcal{N}_{\epsilon} $ is a continuous function.
	
\end{lemma}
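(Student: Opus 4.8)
The plan is to show that $\Phi_\epsilon$ is continuous by fixing $\epsilon > 0$, taking a convergent sequence $z_k \to z$ in $\mathbb{R}^N$, and proving that $\phi_{\epsilon, z_k} \to \phi_{\epsilon, z}$ in $H^1(\mathbb{R}^N)$. By definition $\phi_{\epsilon, z_k}(x) = t_{\epsilon, z_k} u_0(x - z_k/\epsilon)$, so the map decomposes into two pieces: the translation $z \mapsto u_0(\cdot - z/\epsilon)$ and the scaling factor $z \mapsto t_{\epsilon, z}$. The first piece is straightforward: translation is continuous in $H^1(\mathbb{R}^N)$, so $u_0(\cdot - z_k/\epsilon) \to u_0(\cdot - z/\epsilon)$ in $H^1(\mathbb{R}^N)$ as $z_k \to z$. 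The real content is therefore the continuity of the scaling map $z \mapsto t_{\epsilon, z}$.

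To handle the scaling factor, I first recall how $t_{\epsilon, z}$ is determined. Setting $w_z := u_0(\cdot - z/\epsilon)$, the condition $t_{\epsilon, z} w_z \in \mathcal{N}_\epsilon$ means that $t = t_{\epsilon, z}$ solves the Nehari equation $J'_\epsilon(t w_z)(t w_z) = 0$, which by Corollary \ref{C1} reads
\begin{equation*}
t^2 \int \big(|\nabla w_z|^2 + V(\epsilon x)|w_z|^2\big)\,dx = \int (t w_z)^2 \log (t w_z)^2 \, dx.
\end{equation*}
Dividing through by $t^2$ and using $\log(t w_z)^2 = \log t^2 + \log w_z^2$, this becomes a scalar equation of the form
\begin{equation*}
\int \big(|\nabla w_z|^2 + V(\epsilon x)|w_z|^2\big)\,dx = \log t^2 \int |w_z|^2\,dx + \int w_z^2 \log w_z^2\,dx,
\end{equation*}
so that $\log t_{\epsilon, z}^2$ is given explicitly as a quotient with $\int |w_z|^2\,dx = |u_0|_2^2 > 0$ in the denominator. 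Thus $t_{\epsilon, z}$ is uniquely determined and admits a closed-form expression; the uniqueness also follows abstractly because $t \mapsto J_\epsilon(t w_z)$ has a unique maximum on $(0, +\infty)$ for the logarithmic functional.

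The continuity of $z \mapsto t_{\epsilon, z}$ then follows from the continuity in $z$ of each term appearing in that explicit expression. Since $w_{z_k} = u_0(\cdot - z_k/\epsilon) \to w_z$ strongly in $H^1(\mathbb{R}^N)$, the Dirichlet term $\int |\nabla w_{z_k}|^2\,dx$ and the mass $\int |w_{z_k}|^2\,dx$ converge to their limits at $z$; the potential term $\int V(\epsilon x)|w_{z_k}|^2\,dx$ converges because $V$ is bounded (by $(V2)$) and $|w_{z_k}|^2 \to |w_z|^2$ in $L^1(\mathbb{R}^N)$; and the logarithmic term $\int w_{z_k}^2 \log w_{z_k}^2\,dx$ converges by the continuity properties of the nonlinearity on $H^1$ established for this problem (the translated ground state $u_0$ lies in a fixed orbit, so $\int w_{z_k}^2 \log w_{z_k}^2\,dx = \int u_0^2 \log u_0^2\,dx$ is in fact constant in $z$, which simplifies matters considerably). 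Consequently $t_{\epsilon, z_k} \to t_{\epsilon, z}$, and combining this with the strong convergence $w_{z_k} \to w_z$ gives $\phi_{\epsilon, z_k} = t_{\epsilon, z_k} w_{z_k} \to t_{\epsilon, z} w_z = \phi_{\epsilon, z}$ in $H^1(\mathbb{R}^N)$.

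The main obstacle I anticipate is controlling the logarithmic term and confirming that $t_{\epsilon, z}$ stays bounded away from $0$ and $+\infty$ locally in $z$, since the functional $J_\epsilon$ is not $C^1$ and the term $\int u^2 \log u^2\,dx$ is only lower semicontinuous in general. This is precisely where the translation-invariance of $u_0$ helps: because $w_z$ is a rigid translate of the fixed function $u_0$, the $L^2$-mass and the logarithmic integral are independent of $z$, and only the potential term $\int V(\epsilon x)|w_z|^2\,dx$ genuinely varies with $z$. Since $V$ is bounded above and below (with $c_0 > -1$ ensuring $V(\epsilon x) + 1 > 0$), this varying term stays in a compact range, so $t_{\epsilon, z}$ is locally bounded and the explicit formula for $\log t_{\epsilon, z}^2$ depends continuously on $z$ through a single well-behaved integral.
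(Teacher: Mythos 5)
Your proposal is correct, but it proves the key step --- continuity of $z\mapsto t_{\epsilon,z}$ --- by a genuinely different route than the paper. The paper works directly with the defining identity $J_{\epsilon}(t_{\epsilon,z_n}u_0(\cdot-z_n/\epsilon))=\frac{1}{2}\int|t_{\epsilon,z_n}u_0|^2\,dx$ written out in terms of $F_1$ and $F_2$: it asserts boundedness of $(t_{\epsilon,z_n})$, extracts a convergent subsequence $t_{\epsilon,z_n}\to t_*$, passes to the limit in the $F_1$ and $F_2$ integrals by dominated convergence (using that $F_1$ is increasing on $[0,+\infty)$ and $F_1(lu_0)\in L^1$ for all $l>0$), and then invokes uniqueness of $t_{\epsilon,z}$ to identify $t_*=t_{\epsilon,z}$. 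You instead exploit the special structure of the logarithmic nonlinearity to solve the Nehari equation in closed form, $\log t_{\epsilon,z}^2=\bigl(\int(|\nabla w_z|^2+V(\epsilon x)|w_z|^2)\,dx-\int w_z^2\log w_z^2\,dx\bigr)/\int|w_z|^2\,dx$, and then observe that by translation invariance every term except $\int V(\epsilon y+z)|u_0|^2\,dy$ is constant in $z$, so continuity reduces to dominated convergence for a single bounded-potential integral; this simultaneously gives existence, uniqueness, local boundedness of $t_{\epsilon,z}$ and continuity in one stroke, and avoids the compactness-plus-uniqueness argument entirely. Your approach is cleaner and more self-contained here (it is essentially the same identity the paper itself uses as (2.21) in Lemma 2.5), while the paper's argument is the one that would survive for a general nonlinearity without a closed-form Nehari projection. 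One small caution: your phrase that the logarithmic term converges ``by the continuity properties of the nonlinearity on $H^1$'' would be false in general --- $u\mapsto\int u^2\log u^2\,dx$ is not continuous on $H^1(\mathbb{R}^N)$ --- but your parenthetical correctly replaces this with the translation-invariance observation, and you should also note explicitly that $\int u_0^2\log u_0^2\,dx$ is finite (which follows from the decay of the ground state established in \cite{squassina}), since the closed-form expression is meaningless otherwise.
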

\begin{proof} Let $(z_n) \subset \mathbb{R}^N$ and $z \in  \mathbb{R}^N$ with $z_n\rightarrow z$ in $\mathbb{R}^N$. We must prove that
$$
\Phi_{\epsilon}(z_n) \to \Phi_{\epsilon}(z) \quad \mbox{in} \quad H^{1}(\mathbb{R}^N).
$$	
Here, the main point is to prove that
$$
t_{\epsilon,z_n} \to t_{\epsilon,z} \quad \mbox{in} \quad \mathbb{R}.
$$
By definition of $t_{\epsilon,z_n}$ and $t_{\epsilon,z}$, they are the unique numbers that satisfy
$$
J_{\epsilon}(t_{\epsilon,z_n}u_{0}\Big(\cdot-\frac{z_n}{\epsilon}\Big))=\displaystyle \frac{1}{2} \int |t_{\epsilon,z_n}u_{0}\Big(x-\frac{z_n}{\epsilon}\Big)|^{2}\,dx
$$
and
$$
J_{\epsilon}(t_{\epsilon,z}u_{0}\Big(\cdot-\frac{z}{\epsilon}\Big))=\displaystyle \frac{1}{2} \int |t_{\epsilon,z}u_{0}\Big(x-\frac{z}{\epsilon}\Big)|^{2}\,dx
$$
that is,
\begin{equation} \label{F11}
\frac{1}{2}\int (|t_{\epsilon,z_n}\nabla u_0|^{2}+(V(\epsilon x+z_n)+1)|t_{\epsilon,z_n}u_0|^2)\,dx+\int F_1(t_{\epsilon,z_n}u_0)\,dx-\int F_2(t_{\epsilon,z_n}u_0)\,dx=\frac{1}{2} \int |t_{\epsilon,z_n}u_{0}|^{2}\,dx
\end{equation}
and
\begin{equation} \label{F11*}
\frac{1}{2}\int (|t_{\epsilon,z} \nabla u_0|^{2}+(V(\epsilon x+z)+1)|t_{\epsilon,z} u_0|^2)\,dx+\int F_1(t_{\epsilon,z}u_0)\,dx-\int F_2(t_{\epsilon,z}u_0)\,dx=\frac{1}{2} \int |t_{\epsilon,z}u_{0}|^{2}\,dx.
\end{equation}
A simple calculation gives that $(t_{\epsilon,z_n})$ is bounded, thus for some subsequence, we can assume that $t_{\epsilon,z_n} \to t_*$. Since $F_1$ is increasing in $[0,+\infty)$ and $F_1(lu_0) \in L^{1}(\mathbb{R}^N)$ for all $l  >0$, taking the limit of $n \to +\infty$ in (\ref{F11}) and applying the Lebesgue Theorem, we can conclude that
\begin{equation} \label{F11**}
\frac{1}{2}\int (|t_* \nabla u_0|^{2}+(V(\epsilon x+z)+1)|t_*u_0|^2)\,dx+\int F_1(t_*u_0)\,dx-\int F_2(t_*u_0)\,dx=\frac{1}{2} \int |t_*u_{0}|^{2}\,dx.
\end{equation}	
By uniqueness of $t_{\epsilon,z}$, it follows that $t_{\epsilon,z}=t_*$, and so, $t_{\epsilon,z_n} \to t_{\epsilon,z}$. Now, since
$$
u_{0}\Big(\cdot-\frac{z_n}{\epsilon}\Big) \to u_{0}\Big(\cdot-\frac{z}{\epsilon}\Big) \quad \mbox{in} \quad H^{1}(\mathbb{R}^N),
$$
we get the desired result.
\end{proof}

From definition of $\beta$, we have
the following result.
\begin{lemma}\label{lh}
 For each $r>0$, $\underset{\epsilon\rightarrow 0}{\lim}\Big(\sup\Big\{\Big|\beta(\Phi_{\epsilon}(z))-\frac{z}{\vert z\vert}\Big|: \vert z\vert\geq r\Big\}\Big)=0$.
\end{lemma}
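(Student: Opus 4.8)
The plan is to reduce the statement to an explicit integral estimate by exploiting the scaling and translation invariance of the barycenter $\beta$. First I would observe that $\beta$ is invariant under multiplication by a positive scalar: since $\Phi_\epsilon(z)(x)=t_{\epsilon,z}u_0\big(x-\tfrac{z}{\epsilon}\big)$, the factor $t_{\epsilon,z}^2$ appears in both the numerator and the denominator of $\beta(\Phi_\epsilon(z))$ and cancels. Writing $y_\epsilon=\tfrac{z}{\epsilon}$ and performing the change of variables $w=x-y_\epsilon$, I obtain
$$
\beta(\Phi_\epsilon(z))=\frac{1}{|u_0|_2^2}\int \frac{w+y_\epsilon}{|w+y_\epsilon|}\,|u_0(w)|^2\,dw.
$$
Because $\epsilon>0$, the target direction satisfies $\frac{z}{|z|}=\frac{y_\epsilon}{|y_\epsilon|}$, and since $\frac{1}{|u_0|_2^2}\int \frac{y_\epsilon}{|y_\epsilon|}|u_0(w)|^2\,dw=\frac{y_\epsilon}{|y_\epsilon|}$, it suffices to control the difference of the two integrands.

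The key estimate is the elementary inequality $\big|\frac{a}{|a|}-\frac{b}{|b|}\big|\leq \frac{2|a-b|}{|b|}$, valid for nonzero vectors $a,b\in\mathbb{R}^N$ (proved by writing $\frac{a}{|a|}-\frac{b}{|b|}=\frac{a(|b|-|a|)+|a|(a-b)}{|a|\,|b|}$ and using $\big||a|-|b|\big|\leq|a-b|$). Applying it with $a=w+y_\epsilon$ and $b=y_\epsilon$, so that $a-b=w$, gives
$$
\left|\frac{w+y_\epsilon}{|w+y_\epsilon|}-\frac{y_\epsilon}{|y_\epsilon|}\right|\leq \frac{2|w|}{|y_\epsilon|}
$$
for a.e.\ $w$. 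Integrating this against $|u_0(w)|^2$ and using the representation above yields
$$
\left|\beta(\Phi_\epsilon(z))-\frac{z}{|z|}\right|\leq \frac{2}{|y_\epsilon|}\cdot\frac{\int |w|\,|u_0(w)|^2\,dw}{|u_0|_2^2}=\frac{2\epsilon}{|z|}\cdot\frac{\int |w|\,|u_0(w)|^2\,dw}{|u_0|_2^2}.
$$

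For the conclusion I need the first moment $\int |w|\,|u_0(w)|^2\,dw$ to be finite, and this is the one non-formal point of the argument: it is where the decay of the ground state enters. Since $u_0$ is a positive ground state of the constant-coefficient logarithmic problem $J_{c_0}$, it has Gaussian/exponential decay, so $K:=\int |w|\,|u_0(w)|^2\,dw<+\infty$. Setting $\tilde K=2K/|u_0|_2^2$, the bound above reads $\big|\beta(\Phi_\epsilon(z))-\frac{z}{|z|}\big|\leq \tilde K\,\epsilon/|z|\leq \tilde K\,\epsilon/r$ for every $z$ with $|z|\geq r$, uniformly in the direction of $z$. Taking the supremum over $|z|\geq r$ and letting $\epsilon\to 0$ then gives the claim, because the right-hand side is independent of $z$. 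The main obstacle is thus the justification of the finiteness of the first moment of $|u_0|^2$; once that decay is in hand, the rest is the scale/translation invariance of $\beta$ together with the elementary vector inequality.
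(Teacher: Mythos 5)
Your argument is correct, and it starts the same way as the paper's: cancel $t_{\epsilon,z}^2$ in $\beta$, translate by $z/\epsilon$, and reduce the claim to estimating $\int\bigl|\tfrac{w+z/\epsilon}{|w+z/\epsilon|}-\tfrac{z/\epsilon}{|z/\epsilon|}\bigr||u_0(w)|^2\,dw$. Where you diverge is in how this integral is killed. The paper observes only that the integrand tends to $0$ pointwise in $w$ (along arbitrary sequences $\epsilon_n\to 0$, $|z_n|\ge r$) and is dominated by $2|u_0(w)|^2\in L^1$, and concludes by the Dominated Convergence Theorem; this needs nothing about $u_0$ beyond $u_0\in L^2(\mathbb{R}^N)$, but is purely qualitative and has to be phrased sequentially to handle the supremum over $|z|\ge r$. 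You instead integrate the elementary inequality $\bigl|\tfrac{a}{|a|}-\tfrac{b}{|b|}\bigr|\le \tfrac{2|a-b|}{|b|}$ to get the explicit rate
\[
\Bigl|\beta(\Phi_{\epsilon}(z))-\tfrac{z}{|z|}\Bigr|\le \frac{2\epsilon}{r\,|u_0|_2^2}\int |w|\,|u_0(w)|^2\,dw ,
\]
uniform in $|z|\ge r$, which is cleaner for taking the supremum and gives a quantitative $O(\epsilon/r)$ bound, but at the price of the extra hypothesis that the first moment $\int|w|\,|u_0(w)|^2\,dw$ is finite. That hypothesis is indeed available here (the ground state of the constant-potential logarithmic equation has Gaussian-type decay, cf.\ the d'Avenia--Montefusco--Squassina reference already used in the paper), so your proof is complete; just be aware that it is a genuinely stronger input than the paper needs, since the paper's dominated-convergence route works even if $|u_0|^2$ had no finite first moment. (Note also that your same inequality $\le 2\epsilon|x|/r$ is exactly what justifies the pointwise convergence in the paper's argument, so the two proofs share the key vector estimate and differ only in whether it is used pointwise or integrated.)
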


\begin{proof}
The proof follows by showing that for any $(z_{n})\subset \mathbb{R}^N$ with $\vert z_{n}\vert\geq r$ and $\epsilon_{n}\rightarrow 0$, we have that
\begin{equation} \label{NEW1}
\Big|\beta(\Phi_{\epsilon_n}(z_n))-\frac{z_{n}}{\vert z_{n}\vert}\Big|\rightarrow 0\quad \text{as}\,\, n\rightarrow +\infty.
\end{equation}
By change of variables,
$$
\Big|\beta(\Phi_{\epsilon_{n}}(z_{n}))-\frac{z_{n}}{\vert z_{n}\vert}\Big|=\frac{ \displaystyle \int \Big| \frac{\epsilon_{n} x+z_{n}}{\vert \epsilon_{n} x+z_{n}\vert}-\frac{z_{n}}{\vert z_{n}\vert}\Big|\vert u_{0}(x)\vert^{2} dx}{\int \vert u_{0}(x)\vert^{2} dx}.
$$
As for each $x\in \mathbb{R}^N$,
$$
 \Big| \frac{\epsilon_{n} x+z_{n}}{\vert \epsilon_{n} x+z_{n}\vert}-\frac{z_{n}}{\vert z_{n}\vert}\Big|\rightarrow 0\quad\text{as}\quad n\rightarrow +\infty,
$$
by the Lebesgue Dominated Convergence Theorem, we have that
$$
\int \Big| \frac{\epsilon_{n} x+z_{n}}{\vert \epsilon_{n} x+z_{n}\vert}-\frac{z_{n}}{\vert z_{n}\vert}\Big|\vert u_{0}(x)\vert^{2} dx\rightarrow 0\quad\text{as}\quad n\rightarrow +\infty,
$$
proving (\ref{NEW1}).
\end{proof}

As a by-product of  Lemma \ref{lh}, we have the following corollary.
\begin{corollary}\label{corolario2}
Fixed $r>0$, there is $\epsilon_{0}>0$ such that
$$
(\beta(\Phi_{\epsilon}(z)), z)>0, \quad \forall \, \vert z\vert\geq r\quad \text{and}\quad \forall\, \epsilon\in (0, \epsilon_{0}).
$$
\end{corollary}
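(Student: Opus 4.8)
The plan is to reduce the statement to the uniform estimate already established in Lemma \ref{lh} by a one-line vector computation. Throughout, $(\cdot,\cdot)$ denotes the Euclidean inner product in $\mathbb{R}^N$ and $|\cdot|$ the associated norm. I would begin by introducing the error vector
$$
w_{\epsilon}(z):=\beta(\Phi_{\epsilon}(z))-\frac{z}{|z|}, \qquad |z|\geq r,
$$
so that $\beta(\Phi_{\epsilon}(z))=\frac{z}{|z|}+w_{\epsilon}(z)$. The content of Lemma \ref{lh} is precisely that
$$
\lim_{\epsilon\to 0}\Big(\sup\{|w_{\epsilon}(z)|:|z|\geq r\}\Big)=0,
$$
which is a \emph{uniform} smallness statement over the whole range $|z|\geq r$; this uniformity is exactly what will allow a single threshold $\epsilon_0$ to serve for all such $z$ simultaneously.

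Next I would expand the inner product using bilinearity. Since $\big(\frac{z}{|z|},z\big)=|z|$, we obtain
$$
(\beta(\Phi_{\epsilon}(z)),z)=\Big(\frac{z}{|z|},z\Big)+(w_{\epsilon}(z),z)=|z|+(w_{\epsilon}(z),z).
$$
Applying the Cauchy--Schwarz inequality to the cross term gives $|(w_{\epsilon}(z),z)|\leq |w_{\epsilon}(z)|\,|z|$, and hence the lower bound
$$
(\beta(\Phi_{\epsilon}(z)),z)\geq |z|-|w_{\epsilon}(z)|\,|z|=|z|\big(1-|w_{\epsilon}(z)|\big).
$$

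Finally I would invoke Lemma \ref{lh} to fix the threshold: there is $\epsilon_0>0$ such that $\sup\{|w_{\epsilon}(z)|:|z|\geq r\}<1$ for every $\epsilon\in(0,\epsilon_0)$ (one may take the supremum below $\tfrac{1}{2}$ if a margin is desired). For such $\epsilon$ and any $|z|\geq r$ we then have $1-|w_{\epsilon}(z)|>0$, and since $|z|\geq r>0$, the displayed lower bound yields $(\beta(\Phi_{\epsilon}(z)),z)>0$, as claimed. There is no genuine obstacle here: the entire argument is the elementary observation that a vector within distance strictly less than $1$ of the unit vector $z/|z|$ still has strictly positive projection onto $z$; the only point requiring care is that the bound from Lemma \ref{lh} is uniform in $z$ over $|z|\geq r$, so that the choice of $\epsilon_0$ does not depend on $z$.
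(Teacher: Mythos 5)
Your proof is correct and is essentially the same as the paper's: both decompose $\beta(\Phi_{\epsilon}(z))$ as $\tfrac{z}{|z|}$ plus an error term, bound the cross term by Cauchy--Schwarz to get $(\beta(\Phi_{\epsilon}(z)),z)\geq |z|\bigl(1-\bigl|\beta(\Phi_{\epsilon}(z))-\tfrac{z}{|z|}\bigr|\bigr)$, and invoke the uniform convergence of Lemma \ref{lh} to fix $\epsilon_0$. The only cosmetic difference is that the paper takes the error below $\tfrac12$ rather than $1$.
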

\begin{proof}
By  last lemma, for fixed $r>0$, there exits $\epsilon_{0}>0$ such that
$$
\Big|\beta(\Phi_{\epsilon}(z))-\frac{z}{\vert z\vert}\Big|<\frac{1}{2}, \quad  \forall \, \vert z\vert\geq r\quad \text{and}\quad \forall\, \epsilon\in (0, \epsilon_{0}).
$$
On the other hand, notice that
$$
(\beta(\Phi_{\epsilon}(z)), z)=(\beta(\Phi_{\epsilon}(z))-\frac{z}{\vert z\vert}, z)+(\frac{z}{\vert z\vert}, z)=(\beta(\Phi_{\epsilon}(z))-\frac{z}{\vert z\vert}, z)+\vert z\vert,\quad \forall z\in \mathbb{R}^N\setminus \{0\}.
$$
Therefore, for $\vert z\vert\geq r$,
$$
(\beta(\Phi_{\epsilon}(z)), z)\geq \vert z\vert\Big(1-\Big|\beta(\Phi_{\epsilon}(z))-\frac{z}{\vert z\vert}\Big|\Big)>\frac{\vert z\vert}{2}\geq \frac{r}{2}>0,
$$
which completes the proof.
\end{proof}
Now, we define the set
$$
\mathcal{B}_{\epsilon}=\{u\in \mathcal{N}_{\epsilon}: \beta(u)\in Y\}.
$$
Since $\beta(\phi_{\epsilon, 0})=0\in Y$, for all $\epsilon>0$, we have that $\mathcal{B}_{\epsilon}\neq \emptyset$. Associated with the above set,
let us consider the real number $D_{\epsilon}$ given by
$$
D_{\epsilon}=\inf_{u\in \mathcal{B}_{\epsilon}}J_{\epsilon}(u).
$$
The next lemma establishes an important relation between the levels $D_{\epsilon}$ and $m(c_{0})$.

\begin{lemma}\label{vicente}
\noindent $(i)$ There exist $\epsilon_{0}, \sigma>0$ such that
$$
D_{\epsilon}\geq m(c_{0})+\sigma,\quad \forall\, \epsilon\in (0, \epsilon_{0}).
$$
\noindent $(ii)$ $\displaystyle \limsup_{\epsilon\to 0}\Big\{\sup_{x\in X}J_{\epsilon}(\Phi_{\epsilon}(x)))\Big\}< 2m(c_{0})-\sigma$.\\
\end{lemma}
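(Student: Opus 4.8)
The plan is to prove the two items separately: item $(i)$ by contradiction together with the compactness supplied by Lemma~\ref{vice}, and item $(ii)$ by computing the maximum of the fibers $t\mapsto J_{\epsilon}(tw)$ explicitly. For $(i)$ I first record that $D_{\epsilon}\ge m(c_{0})$ for every $\epsilon>0$: since $V(\epsilon\cdot)\ge c_{0}$, any $u\in\mathcal{N}_{\epsilon}$ admits $t\in(0,1]$ with $tu\in\mathcal{N}_{c_{0}}$, whence $m(c_{0})\le J_{c_{0}}(tu)\le J_{\epsilon}(tu)\le J_{\epsilon}(u)$. Consequently, were $(i)$ false, I could choose $\epsilon_{n}\to 0$ and $u_{n}\in\mathcal{B}_{\epsilon_{n}}$ with $J_{\epsilon_{n}}(u_{n})\to m(c_{0})$. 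As $u_{n}\in\mathcal{N}_{\epsilon_{n}}$, Lemma~\ref{vice} yields $(z_{n})$ with $|z_{n}|\to+\infty$, $\liminf_{n}|\epsilon_{n}z_{n}|>0$, and $w_{n}:=u_{n}(\cdot+z_{n})\to u_{1}\neq 0$ strongly in $H^{1}(\mathbb{R}^{N})$.

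The decisive step is to locate the concentration direction. Using the change of variables $x=y+z_{n}$ in $\beta(u_{n})$ and the strong $L^{2}$ convergence $w_{n}\to u_{1}$, one checks (passing to a subsequence along which $z_{n}/|z_{n}|\to\theta$) that $\beta(u_{n})\to\theta$; since each $\beta(u_{n})\in Y$ and $Y$ is a closed subspace, the unit vector $\theta$ lies in $Y$. Taking $y=\theta$ in the definition of $Y_{\lambda}$ and using $\tfrac{\epsilon_{n}z_{n}}{|\epsilon_{n}z_{n}|}=\tfrac{z_{n}}{|z_{n}|}\to\theta$ forces $\epsilon_{n}z_{n}\in Y_{\lambda}$ for large $n$, so by $(V1)$ one gets $\alpha_{1}:=\lim_{n}V(\epsilon_{n}z_{n})\ge\inf_{Y_{\lambda}}V>c_{0}$. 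Passing to the limit in the Nehari identity $\int(|\nabla w_{n}|^{2}+V(\epsilon_{n}z_{n}+\epsilon_{n}x)w_{n}^{2})\,dx=\int w_{n}^{2}\log w_{n}^{2}\,dx$ — the gradient and mass terms converging by strong $H^{1}$ convergence, the potential term by $(V2)$ and the boundedness of $V$, and the logarithmic term by the properties of $F_{1},F_{2}$ as in \cite{cs,sz} — gives $u_{1}\in\mathcal{N}_{\alpha_{1}}$. Hence $m(\alpha_{1})\le J_{\alpha_{1}}(u_{1})=\tfrac12|u_{1}|_{2}^{2}=\lim_{n}\tfrac12|u_{n}|_{2}^{2}=m(c_{0})$. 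However, maximizing $t\mapsto J_{A}(tu)$ explicitly shows $\max_{t}J_{A}(tu)=e^{A}h(u)$ with $h(u)$ independent of $A$, so $m(A)=m(c_{0})e^{A-c_{0}}$ is strictly increasing; therefore $m(\alpha_{1})>m(c_{0})$, a contradiction. This proves $(i)$ and produces $\sigma>0$.

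For $(ii)$, write $w=u_{0}(\cdot-\xi/\epsilon)$ with $\xi\in X$ and maximize the fiber $t\mapsto J_{\epsilon}(tw)$. The explicit maximizer $t_{\epsilon,\xi}$ and the identity $m(A)=m(c_{0})e^{A-c_{0}}$, together with $u_{0}\in\mathcal{N}_{c_{0}}$, pin down
$$
J_{\epsilon}(\Phi_{\epsilon}(\xi))=m(c_{0})\,e^{\,V_{\epsilon}(\xi)-c_{0}},\qquad V_{\epsilon}(\xi):=\frac{1}{|u_{0}|_{2}^{2}}\int V(\epsilon y+\xi)\,u_{0}(y)^{2}\,dy .
$$
Since $t\mapsto e^{t}$ is increasing, it suffices to estimate $\sup_{\xi\in X}V_{\epsilon}(\xi)$. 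Splitting $\int_{\mathbb{R}^{N}}=\int_{|y|\le R}+\int_{|y|>R}$ and using the Lipschitz bound $|V(\epsilon y+\xi)-V(\xi)|\le C\epsilon|y|$ from $(V2)$, the fact that $V(\xi)\le c_{1}$ for $\xi\in X$, and $V\le|V|_{\infty}$, I obtain, uniformly in $\xi\in X$,
$$
V_{\epsilon}(\xi)\le c_{1}+C\epsilon R+\frac{|V|_{\infty}}{|u_{0}|_{2}^{2}}\int_{|y|>R}u_{0}^{2}\,dy .
$$
Letting $\epsilon\to 0$ and then $R\to+\infty$ gives $\limsup_{\epsilon\to 0}\sup_{\xi\in X}V_{\epsilon}(\xi)\le c_{1}$, hence $\limsup_{\epsilon\to 0}\sup_{\xi\in X}J_{\epsilon}(\Phi_{\epsilon}(\xi))\le m(c_{0})e^{c_{1}-c_{0}}$. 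By $(V4)$ one has $c_{1}-c_{0}\le\tfrac{3}{10}c_{2}\le\tfrac{3}{10}<\log 2$, so $m(c_{0})e^{c_{1}-c_{0}}\le m(c_{0})e^{3/10}<2m(c_{0})$; shrinking the $\sigma$ from $(i)$ if necessary so that $\sigma<2m(c_{0})-m(c_{0})e^{3/10}$, the strict inequality $(ii)$ follows for a common $\sigma$ and $\epsilon_{0}$.

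I expect the main obstacle to be the concentration-direction argument in $(i)$: controlling $\beta(u_{n})$ finely enough to conclude that its limit $\theta$ is a unit vector in $Y$, so that $(V1)$ can be activated, and then passing to the limit in the logarithmic term of the Nehari identity, which is precisely where the non-smoothness of $J_{\epsilon}$ and the $F_{1},F_{2}$ decomposition must be used. Item $(ii)$, by contrast, reduces to the explicit fiber computation and the elementary inequality $e^{3/10}<2$ furnished by $(V4)$.
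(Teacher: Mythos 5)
Your proposal is correct, and part $(i)$ follows essentially the paper's own route: negate the statement to produce $\epsilon_{n}\to 0$ and $u_{n}\in\mathcal{B}_{\epsilon_{n}}$ with $J_{\epsilon_{n}}(u_{n})\to m(c_{0})$, invoke Lemma \ref{vice}, show via the barycenter that the translations $z_{n}$ eventually point into $Y_{\lambda}$ (the paper phrases this as $\beta(u_{n})=\tfrac{z_{n}}{|z_{n}|}+o_{n}(1)\in Y$, you as $z_{n}/|z_{n}|\to\theta\in Y$ with $|\theta|=1$; same content), and contradict $m(A)>m(c_{0})$ for $A>c_{0}$. One small caution there: passing to the limit in the Nehari identity to get $u_{1}\in\mathcal{N}_{\alpha_{1}}$ \emph{with equality} requires care for the term $\int F_{1}'(w_{n})w_{n}\,dx$ (Fatou only gives $\liminf\ge$); the paper sidesteps this by keeping only the inequality $J'_{\alpha_{1}}(u_{1})u_{1}\le 0$ and projecting with some $t_{1}\in(0,1]$, which is all you need since $\tfrac{t_{1}^{2}}{2}|u_{1}|_{2}^{2}\le\tfrac12|u_{1}|_{2}^{2}$ — your argument survives with that cosmetic fix. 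Part $(ii)$ is where you genuinely diverge, and profitably so: the paper bounds $J_{\epsilon}(\Phi_{\epsilon}(x))$ by the value of $J_{c_{1}}$ at $t=1$ and then uses $(V4)$ linearly ($J_{c_{0}}(u_{0})+\tfrac{3}{10}c_{2}\int|u_{0}|^{2}\le\tfrac{8}{5}m(c_{0})$), which tacitly assumes the Nehari normalization $t_{\epsilon,x}$ does not exceed $1$; your explicit fiber maximization $\max_{t}J_{A}(tu)=\tfrac{b}{2}e^{a/b-1}$, giving $m(A)=m(c_{0})e^{A-c_{0}}$ and $J_{\epsilon}(\Phi_{\epsilon}(\xi))=m(c_{0})e^{V_{\epsilon}(\xi)-c_{0}}$, accounts for that constant exactly and reduces $(ii)$ to $e^{3/10}<2$. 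The same formula also substantiates the strict monotonicity $m(A)>m(c_{0})$ used in $(i)$, which the paper asserts without computation. The only loose end in $(ii)$ is the estimate $\tfrac{1}{b}\int_{|y|\le R}V(\xi)u_{0}^{2}\le V(\xi)$, which needs $V(\xi)\ge0$; since the paper allows $c_{0}>-1$, you should instead bound $V_{\epsilon}(\xi)-V(\xi)$ directly, which costs nothing.
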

\begin{proof} \mbox{} \\
\noindent $(i)$:  From the definition of $D_{\epsilon}$, we know that
$$
D_{\epsilon}\geq m(c_{0})\quad \forall\, \epsilon>0.
$$
Supposing by contradiction that the lemma does not hold, there exists $\epsilon_{n}\rightarrow 0$ satisfying
$$
D_{\epsilon_{n}}\rightarrow m(c_{0}).
$$
Hence, there is $u_{n}\in \mathcal{N}_{\epsilon_{n}}$ with $\beta(u_{n})\in Y$ such that
$$
J_{\epsilon_{n}}(u_{n})\rightarrow m(c_{0}).
$$
Applying Lemma \ref{vice}, there are $u_{1}\in H^{1}(\mathbb{R}^N)\setminus \{0\}$ and a sequence $(z_{n})\subset \mathbb{R}^N$ with  $\displaystyle \liminf_{n\to +\infty}\vert \epsilon_{n}z_{n}\vert>0$ verifying
$$
u_{n}(\cdot+z_{n})\rightarrow u_{1}\quad \text{in}\,\, H^{1}(\mathbb{R}^N),
$$
that is
$$
u_{n}=u_{1}(\cdot-z_{n})+\omega_{n}\quad \text{with}\quad \omega_{n}\rightarrow 0 \quad \text{in}\,\, H^{1}(\mathbb{R}^N).
$$
From the definition of $\beta$,
$$
\beta(u_{1}(\cdot-z_{n}))=\frac{\displaystyle \int \frac{\epsilon_{n} x+\epsilon_{n} z_{n}}{|\epsilon_{n} x+\epsilon_{n}z_{n}|} |u_{1}|^{2}\,dx}{\int |u_{1}|^{2}\,dx}.
$$
Repeating the same arguments explored in the proof of Lemma \ref{lh}, we see that
$$
\beta(u_{1}(\cdot-z_{n}))=\frac{z_{n}}{\vert z_{n}\vert}+o_{n}(1),
$$
and so,
$$
\beta(u_{n})=\beta(u_{1}(\cdot-z_{n}))+o_{n}(1)=\frac{z_{n}}{\vert z_{n}\vert}+o_{n}(1).
$$
Since $\beta(u_{n})\in Y$, we infer that $\frac{z_{n}}{\vert z_{n}\vert}\in Y_{\lambda}$ for $n$ large enough. Consequently,
 $z_{n}\in Y_{\lambda}$ for $n$ large enough, implying that
$$
\liminf_{n\rightarrow\infty}V(\epsilon_{n}z_{n})>c_{0}.
$$
Assuming $A=\displaystyle\liminf_{n\rightarrow\infty}V(\epsilon_{n}z_{n})$, the last inequality together with the Fatou's lemma yields,
$$
m(c_{0})=\liminf_{n\rightarrow\infty}J_{\epsilon_{n}}(u_{n})\geq \liminf_{n\rightarrow\infty}J_{\epsilon_{n}}(t u_{n})\geq J_{A}(t u_{1})\geq m(A)>m(c_{0}),
$$
which is a contradiction. Here $t\in (0, 1]$ such that $J'_{A}(t u_{1})t u_{1}=0$ and $u_{1}\neq 0$.

\noindent $(ii)$: By condition $(V4)$ and the fact that $u_{0}$ is a ground state solution associated with $J_{c_{0}}$, we deduce that
$$
\aligned
\limsup_{\epsilon\to 0}\Big\{\sup_{x\in X}J_{\epsilon}(\Phi_{\epsilon}(x)\Big\}&\leq \dis\frac{1}{2}\int \big (|\nabla u_{0}|^2+(c_{1} +1)|u|^2\big)dx-\dis\frac{1}{2}\int u_{0}^2\log u_{0}^2dx\\
&\leq J_{c_{0}}(u_{0})+\frac{3}{10}c_{2}\int |u_{0}|^2 dx\\
&\leq J_{c_{0}}(u_{0})+\frac{3}{5}J_{c_{0}}(u_{0})=m(c_{0})+\frac{3}{5}m(c_{0})<2m(c_{0}),\,\, \forall\, \epsilon\in (0, \epsilon_{0}),
\endaligned
$$
where $c_{2}=\min\{1, c_{0}\}$.
\end{proof}
Now, we are ready to show the minimax level. In what follows, we fix $\epsilon \in (0, \epsilon_0)$, $\Phi=\Phi_{\epsilon}, J=J_{\epsilon}$ and the following sets
$$
J_{d}=\{u \in H^{1}(\mathbb{R}^N)\,:\,J(u)\leq d\}, \quad Q=\overline{B}_R(0) \cap X \quad \mbox{and} \quad \partial Q= \partial \overline{B}_R(0) \cap X.
$$
Using the above notations, we define the class of the functions
$$
\Gamma=\Big\{h\in C(Q,\, K_{r}): h(x)=\Phi(x), \quad \forall x \in \partial Q  \Big\}
$$
where $\approx$ denotes the homotopy relation, $r>0$, $K=\Phi(Q)$ and
$$
K_r=\{u \in H^{1}(\mathbb{R}^N)\,:\,dist(u,K)<r\}.
$$
Note that $\Gamma \not= \emptyset $, because Lemma \ref{PhiEP} ensures that $\Phi \in \Gamma$.

In what follows we set
$$
\Upsilon_{r}=\{u \in K_r\,:\, \beta(u) \in Y \},
$$
which is not empty because $\mathcal{B}_{\epsilon} \subset K_r$.

\begin{lemma} \label{R} There is $r_0>0$ such that such that
$$
\Theta_r=\inf_{u \in \Upsilon_r}J(u)> m(c_0)+\sigma/2, \quad \forall r \in (0,r_0).
$$
Moreover, there exists $R>0$ such that
$$
J_{\epsilon}(\Phi_{\epsilon}(x))\leq \frac{1}{2}(m(c_{0})+\Theta_r), \quad  \forall\,x\in\partial B_{R}(0)\cap X.
$$
	
\end{lemma}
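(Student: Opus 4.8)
The plan is to prove the two assertions by rather different mechanisms: part $(i)$ is a compactness-plus-lower-semicontinuity argument that reduces everything to the bound $D_\epsilon\geq m(c_0)+\sigma$ from Lemma \ref{vicente}$(i)$, while part $(ii)$ rests on an explicit computation of $J_\epsilon(\Phi_\epsilon(x))$ along the Nehari fibre, combined with the saddle hypothesis $(V1)$.

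For $(i)$ I would first note that $Q=\overline{B}_R(0)\cap X$ is compact (it is closed and bounded in the finite-dimensional space $X$), so by the continuity of $\Phi_\epsilon$ established in Lemma \ref{PhiEP} the set $K=\Phi_\epsilon(Q)$ is compact in $H^{1}(\mathbb{R}^N)$; I also record that $\beta$ is continuous on $H^{1}(\mathbb{R}^N)\setminus\{0\}$. Arguing by contradiction, if the conclusion failed I could choose, for each $n$, a radius $r_n\in(0,1/n)$ with $\Theta_{r_n}\leq m(c_0)+\sigma/2$ and then $u_n\in\Upsilon_{r_n}$ with $J_\epsilon(u_n)\leq m(c_0)+\sigma/2+1/n$. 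Since $\mathrm{dist}(u_n,K)<r_n\to 0$ and $K$ is compact, a subsequence satisfies $u_n\to v$ in $H^{1}(\mathbb{R}^N)$ with $v\in K\subset\mathcal{N}_\epsilon$, hence $v\neq 0$; continuity of $\beta$ and closedness of the subspace $Y$ give $\beta(v)=\lim_n\beta(u_n)\in Y$, so $v\in\mathcal{B}_\epsilon$ and therefore $J_\epsilon(v)\geq D_\epsilon\geq m(c_0)+\sigma$. On the other hand, writing $J_\epsilon=\Phi_\epsilon+\Psi$ as in (\ref{funcional}) with $\Phi_\epsilon\in C^{1}$ and $\Psi$ convex and lower semicontinuous, the functional $J_\epsilon$ is lower semicontinuous, whence $J_\epsilon(v)\leq\liminf_n J_\epsilon(u_n)\leq m(c_0)+\sigma/2$, a contradiction. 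This produces the desired $r_0$.

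For $(ii)$, since $\Phi_\epsilon(x)=t_{\epsilon,x}u_0(\cdot-x/\epsilon)$ is the Nehari projection of the translate of $u_0$, one has $J_\epsilon(\Phi_\epsilon(x))=\max_{t\geq 0}J_\epsilon(tu_0(\cdot-x/\epsilon))$; carrying out this one-variable maximization and using the Nehari identity $J'_{c_0}(u_0)u_0=0$ to eliminate the gradient term yields the closed expression
\begin{equation*}
J_\epsilon(\Phi_\epsilon(x))=\frac{|u_0|_2^2}{2}\exp\!\Big(-c_0+\frac{\int V(\epsilon y+x)u_0^2\,dy}{\int u_0^2\,dy}\Big)=m(c_0)\,\exp\!\Big(-c_0+\frac{\int V(\epsilon y+x)u_0^2\,dy}{\int u_0^2\,dy}\Big),
\end{equation*}
where I used $m(c_0)=J_{c_0}(u_0)=\tfrac12|u_0|_2^2$. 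Because part $(i)$ gives $\Theta_r>m(c_0)+\sigma/2$, the target satisfies $\tfrac12(m(c_0)+\Theta_r)>m(c_0)+\sigma/4$, so it suffices to bound the exponential factor: I would fix $\eta>0$ with $m(c_0)e^{\eta}\leq m(c_0)+\sigma/4$, use $(V1)$ to select a radius $R$ with $\sup_{x\in\partial B_R(0)\cap X}V(x)<c_0+\eta/2$, and then estimate the weighted average of $V$ by splitting $\mathbb{R}^N$ into $\{|y|\leq M\}$ and $\{|y|>M\}$: the Lipschitz bound on $V$ from $(V2)$ controls the first region by $V(x)+\|\nabla V\|_\infty\epsilon M$ and $u_0\in L^{2}(\mathbb{R}^N)$ makes the tail over $\{|y|>M\}$ negligible. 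Choosing $M$ large and then $\epsilon_0$ small, this gives $\frac{\int V(\epsilon y+x)u_0^2}{\int u_0^2}\leq c_0+\eta$ uniformly in $x\in\partial B_R(0)\cap X$, hence $J_\epsilon(\Phi_\epsilon(x))\leq m(c_0)e^{\eta}\leq m(c_0)+\sigma/4<\tfrac12(m(c_0)+\Theta_r)$.

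The main obstacle is the uniform estimate in $(ii)$: the average $\int V(\epsilon y+x)u_0^2\,dy/\int u_0^2\,dy$ must be pushed below $c_0+\eta$ for every $x$ on the sphere $\partial B_R(0)\cap X$, even though $\epsilon y+x$ leaves $X$ and ranges over all of $\mathbb{R}^N$; this is exactly the point where $(V1)$, through $\inf_{R>0}\sup_{\partial B_R(0)\cap X}V=c_0$, and the concentration of $u_0$ are indispensable. In $(i)$ the only subtle point is that $J_\epsilon$ is merely lower semicontinuous, which happens to be the correct direction to conclude $J_\epsilon(v)\leq\liminf_n J_\epsilon(u_n)$.
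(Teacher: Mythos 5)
Your argument for the first assertion is essentially identical to the paper's: the same contradiction via compactness of $K=\Phi_\epsilon(Q)$ in $H^{1}(\mathbb{R}^N)$, continuity of $\beta$ together with closedness of $Y$, the bound $D_\epsilon\geq m(c_0)+\sigma$ from Lemma \ref{vicente}(i), and the lower semicontinuity of $J_\epsilon$. For the second assertion the paper merely asserts, citing $(V1)$, that $\sup_{x\in\partial B_R(0)\cap X}J_{\epsilon}(\Phi_{\epsilon}(x))\leq m(c_0)+\delta$ for suitable $R$ and small $\epsilon$ and then compares with $\Theta_r$ exactly as you do; your closed formula $J_\epsilon(\Phi_\epsilon(x))=m(c_0)\exp\bigl(-c_0+\int V(\epsilon y+x)u_0^2\,dy/\int u_0^2\,dy\bigr)$ is correct (it follows from $J_\epsilon(tu)=t^2J_\epsilon(u)-\tfrac{1}{2}t^2\log(t^2)|u|_2^2$ and $m(c_0)=\tfrac{1}{2}|u_0|_2^2$), and together with the $(V1)$--$(V2)$ splitting of $\mathbb{R}^N$ into $\{|y|\leq M\}$ and its complement it gives a complete justification of the bound the paper leaves implicit.
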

\begin{proof} Assume by contradiction that the lemma does not hold. Then, there is $r_n \to 0$ and $u_n \in \Upsilon_{r_n}$ such that $J(u_n) \leq m(c_0)+\sigma/2$. From definition of $\Upsilon_{r_n}$, there is $v_n \in K$ such that $\|u_n-v_n\| \leq r_n$. Since $K$ is compact, there is a subsequence of $(v_n)$, still denoted by itself, and $v \in K$ such that $v_n \to v$ in $H^{1}(\mathbb{R}^N)$, then $u_n \to v$ in $H^{1}(\mathbb{R}^N)$ and $\beta(v) \in Y$, from where it follows that $v \in \mathcal{B}_{\epsilon}$, then by Lemma \ref{vicente}(i),  $J(v) \geq m(c_0)+\sigma$. On the other hand, since $J_{\epsilon}$ is l.s.c., we have that $\displaystyle \liminf_{n\to +\infty}J(u_n) \geq J(v)$, which is absurd.

By $(V1)$, given $\delta>0$, there are $\epsilon_{0}, R>0$ such that
$$
\sup\Big\{J_{\epsilon}(\Phi_{\epsilon}(x)): x\in \partial B_{R}(0)\cap X\Big\}\leq m(c_{0})+\delta,\,\, \forall\, \epsilon\in (0, \epsilon_{0}).
$$
Fixing $\delta=\frac{\sigma}{4}$, where $\sigma$ was given in $(i)$, we have that
$$
\sup\Big\{J_{\epsilon}(\Phi_{\epsilon}(x)): x\in \partial B_{R}(0)\cap X\Big\}\leq \frac{1}{2}\Big(2m(c_{0})+\frac{\delta}{2}\Big)<\frac{1}{2}(m(c_{0})+\Theta_{r}),\,\, \forall\, \epsilon\in (0, \epsilon_{0}).
$$

\end{proof}

\begin{lemma}\label{te}
If $h \in \Gamma$, then $h(Q) \cap \Upsilon_{r}\neq \emptyset$ for all $r \in (0,r_0).$
\end{lemma}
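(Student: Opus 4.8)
The plan is to reduce the statement to a Brouwer degree computation in the finite-dimensional space $X$. Let $P$ denote the orthogonal projection of $\mathbb{R}^N$ onto $X$; recalling that the decomposition $\mathbb{R}^N=X\oplus Y$ is orthogonal, we have, for every $u\in H^1(\mathbb{R}^N)\setminus\{0\}$, that $\beta(u)\in Y$ if and only if $P(\beta(u))=0$. So proving $h(Q)\cap\Upsilon_r\neq\emptyset$ amounts to producing a point $x\in Q$ with $P(\beta(h(x)))=0$.

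First I would arrange, shrinking $r_0$ if necessary, that $0\notin K_r$ for all $r\in(0,r_0)$. This is possible because $K=\Phi(Q)$ is compact and contained in $\mathcal{N}_\epsilon$, so $\mathrm{dist}(0,K)>0$; taking $r_0\le\mathrm{dist}(0,K)$ forces $0\notin K_r$. Consequently, for any $h\in\Gamma$ the image $h(Q)\subset K_r$ avoids the origin, and since $\beta$ is continuous on $H^1(\mathbb{R}^N)\setminus\{0\}$ and $P$ is linear, the map
$$
G:Q\to X,\qquad G(x)=P(\beta(h(x)))
$$
is well defined and continuous. Next I would study $G$ on the boundary $\partial Q=\partial B_R(0)\cap X$. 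For $x\in\partial Q$ one has $h(x)=\Phi(x)$, hence $G(x)=P(\beta(\Phi_\epsilon(x)))$, and since $x\in X$ gives $Px=x$,
$$
(G(x),x)=(P(\beta(\Phi_\epsilon(x))),x)=(\beta(\Phi_\epsilon(x)),x).
$$
Choosing $R$ at least as large as the radius furnished by Corollary \ref{corolario2}, this inner product is strictly positive for every $x\in\partial Q$; in particular $G$ has no zero on $\partial Q$.

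Finally I would close with a degree argument in $X$. Set $\Omega=B_R(0)\cap X$ and define the homotopy $H(t,x)=(1-t)G(x)+tx$ for $t\in[0,1]$. On $\partial Q$ one computes $(H(t,x),x)=(1-t)(\beta(\Phi_\epsilon(x)),x)+t|x|^2>0$, since for $t<1$ the first term is positive and for $t=1$ the second equals $R^2$. Thus $H(t,\cdot)$ never vanishes on $\partial\Omega$, and homotopy invariance of the Brouwer degree gives
$$
\deg(G,\Omega,0)=\deg(\mathrm{Id},\Omega,0)=1\neq0.
$$
Hence $G$ admits a zero $x_0\in\Omega$, for which $\beta(h(x_0))\in Y$ while $h(x_0)\in K_r$, i.e. $h(x_0)\in\Upsilon_r$, proving $h(Q)\cap\Upsilon_r\neq\emptyset$.

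I expect the main obstacle to be the bookkeeping that makes $G$ genuinely well defined and continuous — securing $0\notin K_r$ so that $\beta\circ h$ makes sense on all of $Q$ — together with the correct calibration of $R$ relative to the radius in Corollary \ref{corolario2} so that the boundary inner product is positive; once these are in place, the degree step is routine.
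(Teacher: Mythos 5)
Your proof is correct and follows essentially the same route as the paper: reduce to finding a zero of $P(\beta(h(\cdot)))$ on $Q$, use Corollary \ref{corolario2} to get positivity of the boundary inner product, and conclude by a linear homotopy to the identity and invariance of the Brouwer degree. You even supply two details the paper glosses over — shrinking $r_0$ so that $0\notin K_r$ (making $\beta\circ h$ well defined on all of $Q$) and stating the conclusion correctly as $P(\beta(h(x_*)))=0$, i.e.\ $\beta(h(x_*))\in Y$, rather than $\beta(h(x_*))=0$ — so no changes are needed.
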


\begin{proof}
It is enough to show that for all $h\in \Gamma$, there is $x_{*}\in Q$ such that
$$
\beta(h(x_{*}))\in Y.
$$
For each $h\in \Gamma$, we set the function $g: Q \rightarrow \mathbb{R}^N$ given by
$$
g(x)=\beta(h(x))\quad \forall x\in Q,
$$
and the homotopy $\mathcal{F}: [0, 1]\times Q\rightarrow X$
as
$$
\mathcal{F}(t, x)=tP_{X}(g(x))+(1-t)x,
$$
where $P_{X}$ is the projection onto $X=\{(x, 0): x\in \mathbb{R}^N\}$. By Corollary \ref{corolario2}, fixed $R>0$ and $\epsilon>0$ small enough, we have that
$$
(\mathcal{F}(t, x), x)>0, \quad \forall (t, x)\in [0, 1]\times \partial Q .
$$
Using the homotopy invariance property of the Topological degree, we derive
$$
d(g, Q, 0)=1,
$$
implying that there exists $x_{*}\in Q$ such that $\beta(h(x_{*}))=0$.
\end{proof}

Now, define the minimax value
$$
C_{\epsilon}=\inf_{h \in \Gamma}\sup_{x \in Q}J(h(x)).
$$
From Lemmas \ref{R} and \ref{te},
\begin{equation}\label{eq41}
C_{\epsilon}\geq \Theta_r= \inf_{u \in \Upsilon_r}J(u) \geq m(c_{0})+\sigma/2.
\end{equation}
On the other hand,
$$
C_{\epsilon}\leq \sup_{x\in Q}J(\Phi(x)).
$$
Then, by Lemma \ref{vicente}(ii),
\begin{equation}\label{eq42}
C_{\epsilon}\leq \sup_{x\in Q}J(\Phi(x))<2m(c_{0})-\sigma.
\end{equation}
From (\ref{eq41})  and (\ref{eq42}),
$$
C_{\epsilon}\in (m(c_{0})+\sigma/2, \,\, 2m(c_{0})-\sigma).
$$
Now, by Lemma \ref{le22}, we know that the $J$ satisfies the $(PS)_{c}$ condition for all $c\in (m(c_{0})+\sigma/2, \,\, 2m(c_{0})-\sigma)$,
hence $J$ satisfies the $(PS)_{C_{\epsilon}}$ condition for $\epsilon$ small enough. Using this fact we can ensure that $C_{\epsilon}$
is a critical level for $J$. To see why, we will follow the same type of ideas found in the proof of \cite[Theorem 3.4]{szulkin}. Have this in mind, by Lemma \ref{R}, we can fix  for $\tau >0$ small enough such that
$$
C_\epsilon - \tau/2 > \frac{1}{2}(m(c_0)+\Theta_r),
$$
and we set
$$
\Gamma_1=\Big\{h\in C(Q,\, K_r): h|_{\partial Q} \approx \Phi|_{\partial Q} \quad \mbox{in} \quad J_{C_\epsilon - \tau/4}, \quad \sup_{x \in \partial Q}J(h(x))\leq C_\epsilon - \tau/2 \Big\}
$$
where $\approx$ denotes the homotopy relation  and the number
$$
C^*=\inf_{h \in \Gamma_1}\sup_{x \in Q}J(h(x)).
$$
Arguing as in \cite[Theorem 3.4]{szulkin} we have that $C^*=C_\epsilon$, and so, it is enough to prove that $C^*$ is a critical level. In order to show this, we argue by contradiction by supposing that $C^*$ is not a critical point and fixing $\tau>0$ small enough and $h \in \Gamma_1$ such that
\begin{equation} \label{D0}
\Pi(h) \leq C^*+\tau \quad \mbox{and} \quad \Pi(g)-\Pi(h) \geq -\tau d(g,h), \quad \forall g \in \Gamma_1
\end{equation}
where
$$
\Pi(g)=\sup_{x \in Q}J(g(x)), \quad \forall g \in \Gamma_1
$$
and
$$
d(g,h)=\sup_{x\in Q}\|g(x)-h(x)\|.
$$
Now, we apply \cite[Proposition 2,3]{szulkin} with $A=h(Q)$ to find a closed subset $W$ containing $A$ in it interior and a deformation $\alpha_s:W \to H^{1}(\mathbb{R}^N)$ having the following  properties: \\
\begin{equation} \label{D1}
\|u-\alpha_s(u)\| \leq s, \quad \forall u \in W \quad \mbox{and} \quad s \approx 0^+,
\end{equation}
\begin{equation} \label{D2}
J(\alpha_s(u))-J(u)\leq 2s, \quad \forall u \in W,
\end{equation}
\begin{equation} \label{D3}
J(\alpha_s(u))-J(u) \leq -2\tau s \quad \forall u \in W \quad \mbox{with} \quad J(u) \geq C^*-\tau
\end{equation}
and
\begin{equation} \label{D4}
\sup_{u\in A}J(\alpha_s(u))-\sup_{u\in A}J(u)\leq -2\tau s.
\end{equation}
Now, it is easy to see that $g=\alpha_s \circ h \in \Gamma_1$ for $s$ small enough. However, from (\ref{D0}), (\ref{D1}) and (\ref{D4})
$$
-2\tau s \geq \Pi(g) - \Pi(h) \geq -\tau d(g,h) \geq -\tau s,
$$
which is a contradiction. This contradiction shows that $C^*$ is a critical level and the proof is completed. The fact that $C^* \in (m(c_0),2m(c_0))$ permits to conclude that the solutions with energy equal to $C^*$ do not change sign, and as $f(t)=t\log t^2$ is an odd function, we can assume that they are nonnegative. Now, the positivity follows by maximum principle.

\vspace{0.5 cm}

\vspace{1 cm}

\noindent \textsc{Claudianor O. Alves } \\
Unidade Acad\^{e}mica de Matem\'atica\\
Universidade Federal de Campina Grande \\
Campina Grande, PB, CEP:58429-900, Brazil \\
\texttt{coalves@mat.ufcg.edu.br} \\
\noindent and \\
\noindent \textsc{Chao Ji}(Corresponding Author) \\
Department of Mathematics\\
East China University of Science and Technology \\
Shanghai 200237, PR China \\
\texttt{jichao@ecust.edu.cn}

\end{document}